\renewcommand{\bar}{\overline}
\renewcommand{\bar}{\overline}
\def \r{\mathbb R}
\def \z{\mathbb Z}
\def \D{\mathcal D_\gamma}
\def \Dd{\mathcal D}
\def \({\langle}
\def \){\rangle}
\renewenvironment{subequations}{%
  %\refstepcounter{equation}%
  \protected@edef\theparentequation{\theequation}%
  \setcounter{parentequation}{\value{equation}}%
  \setcounter{equation}{0}%
  \def\theequation{\roman{equation}}%
  \ignorespaces
}{%
  \setcounter{equation}{\value{parentequation}}%
  \ignorespacesafterend
} \makeatother
\newtheorem{theorem}{Theorem}[section]
\newtheorem{lemma}[theorem]{Lemma}
\newtheorem{proposition}[theorem]{Proposition}
\newtheorem{corollary}[theorem]{Corollary}
\theoremstyle{remark}
\newtheorem{remark}[theorem]{Remark}
\theoremstyle{definition}
\newtheorem{definition}[theorem]{Definition}
\newtheorem{example}[theorem]{Example}
\newtheorem{problem}{Problem}
\title{Finite and infinitesimal flexibility of semidiscrete surfaces}
\author{Oleg Karpenkov}
\date{17 April 2015}
\thanks{The work is partially supported by FWF grant No.~S09209.}
\keywords{Semidiscrete surfaces, flexibility, infinitesimal
flexibility}
\email[Oleg Karpenkov]{karpenk@liv.ac.uk}
\address{University of Liverpool}
\begin{document}
\input{epsf}

\begin{abstract}
In this paper we study infinitesimal and finite flexibility for
generic semi\-discrete surfaces.
We prove that generic 2-ribbon semidiscrete surfaces have one degree of infinitesimal and finite flexibility.
In particular we write down a system of differential equations describing isometric deformations
in the case of existence.
Further we find a necessary condition of 3-ribbon infinitesimal flexibility.
For an arbitrary $n\ge 3$ we prove that every generic $n$-ribbon surface has at most one degree of finite/infinitesimal flexibility.
Finally, we discuss the relation between general semidiscrete surface flexibility and 3-ribbon subsurface flexibility.
We conclude this paper with one surprising property of isometric
deformations of developable semidiscrete surfaces.
\end{abstract}

\maketitle

\tableofcontents

\section*{Introduction}

A mapping $f:\r\times \z \to\r^3$, where the dependence on the
continuous parameter is smooth, is called a {\it semidiscrete
surface}. Let us connect $f(t,z)$ with $f(t,z{+}1)$ by segments
for all possible pairs $(t,z)$. The resulting
surface is a {\it piecewise ruled surface}.

In this paper we study infinitesimal and finite flexibility
for such semidiscrete surfaces. By {\it isometric deformations} of a semidiscrete surface
$f$ we understand deformations that preserves inner geometry of the
corresponding ruled surfaces and in addition that preserve all
line segments connecting $f(t,z)$ with $f(t,z{+}1)$.

\vspace{2mm}

Many questions on discrete polyhedral surfaces have their origins
in classical theory of smooth surfaces. Flexibility is not an
exception from this rule. The general theory of flexibility of
surfaces and polyhedra is discussed in the overview~\cite{Sab} by
I.~Kh.~Sabitov.

In 1890~\cite{Bia2} L.~Bianchi introduced a necessary and
sufficient condition for the existence of isometric deformations
of a surface preserving some conjugate system (i.e., two
independent smooth fields of directions tangent to the surface),
see also in~\cite{Eis}. Such surfaces can be understood as certain
limits of semidiscrete surfaces.

On the other hand, semidiscrete surfaces are themselves the limits
of certain polygonal surfaces (or {\it meshes}). For the discrete
case of flexible meshes much is now known. We refer the reader
to~\cite{BHS}, ~\cite{WP}, ~\cite{Kok}, and~\cite{3x3} for some
recent results in this area. For general relations to the
classical case see a recent book~\cite{BS} by A.~I.~Bobenko and
Yu.~B.~Suris. It is interesting to notice that the flexibility
conditions in the smooth case and the discrete case are of a
different nature. Currently there is no clear description of
relations between them in terms of limits.

The place of the study of semidiscrete surfaces is between the
classical and the discrete cases. Main concepts of semidiscrete
theory are described by J.~Wallner in~\cite{Wal1},
and~\cite{Wal2}. Some problems related to isothermic semidiscrete
surfaces are studied by C.~M\"uller in~\cite{Mue}.
Semidiscrete surfaces from the viewpoint of parallelity,
offsets, and curvatures were studied by J.~Wallner and O.~Karpenkov in~\cite{KW2014}.

\vspace{2mm}

We investigate necessary condition for existence of isometric
deformations of semidiscrete surfaces. To avoid pathological
behavior related to noncompactness of semidiscrete surfaces we
restrict ourselves to compact subsets of the following type. An
{\it $n$-ribbon
 surface} is a mapping
$$
f:[a,b]\times \{0,\ldots, n\} \to\r^3, \qquad (t,i) \mapsto
f_i(t).
$$
We also use the notion
$$
\Delta f_i(t)=f_{i+1}(t)-f_{i}(t). $$

\vspace{2mm}

While working with a rather abstract semidiscrete or $n$-ribbon
surface $f$ we keep in mind the two-dimensional piecewise-ruled
surface associated to it (see Fig.~\ref{part}).

\begin{figure}
$$\epsfbox{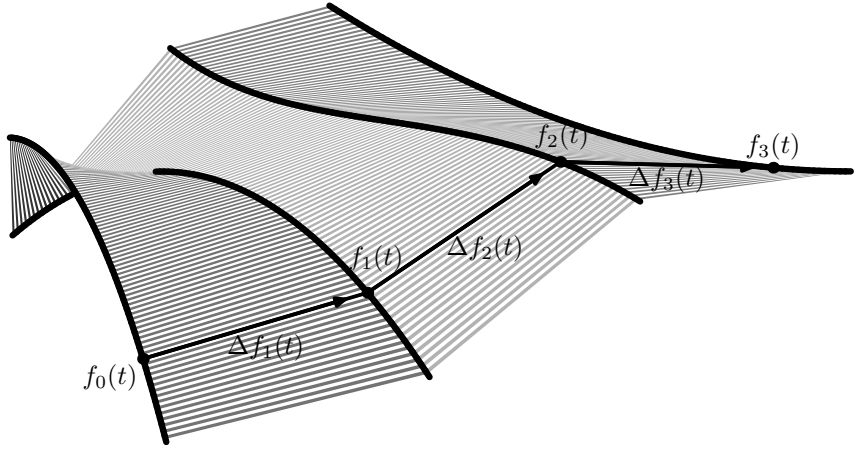}$$
\caption{A  3-ribbon surface.}\label{part}
\end{figure}

\vspace{2mm}

Note that, within this paper we traditionally consider $t$ as an argument of a semidiscrete
surface $f$. The time parameter for deformations is $\lambda$.

\vspace{2mm}

In present paper we prove that every generic 2-ribbon  surface (as a ruled
surface) is flexible and has one degree of infinitesimal and finite flexibility in the generic
case (Theorem~\ref{1-d-inf} and Theorem~\ref{2-ribbon flex}). This is quite surprising since
generic 1-ribbon surfaces have infinitely many degrees of flexibility,
see, for instance, in~\cite{PW2}, Theorem~5.3.10.  We also find a
system of differential equations for the deformation of 2-ribbon
surfaces (Definition~\ref{defODE} and Proposition~\ref{euV+}). In contrast to that,
a generic $n$-ribbon surface is rigid for $n\ge 3$. For the case
$n=3$ we prove the following statement (see Theorem~\ref{inf} and
Remark~\ref{infREM}).

\vspace{1mm}

{
\noindent
{\bf Infinitesimal flexibility condition.}\\ {\it If a 3-ribbon
surface is infinitesimally flexible then the following
condition holds:
$$
\dot{\Lambda}=(H_2-H_1)\Lambda,
$$
where
$$
\Lambda= \frac{\big(\dot f_1,\ddot f_1,\Delta f_0\big)}{\big(\dot
f_2,\ddot f_2,\Delta f_2\big)} \frac{\big(\dot f_2,\Delta
f_1,\Delta f_2\big)^2}{\big(\dot f_1,\Delta f_0,\Delta
f_1\big)^2},
$$
and
$$
H_i(t)= \frac{(\dot f_i, \Delta \dot f_{i-1},\Delta f_{i})
+(\dot{f}_i, \Delta f_{i-1}, \Delta \dot f_{i})}
{(\dot{f}_i,\Delta f_{i-1},\Delta f_{i})}, \quad i=1,2.
$$
}
{\it Remark.} Throughout this paper we denote the derivative with
respect to variable $t$ by the dot symbol.
}

\vspace{2mm}

%Having this condition, we also show how to construct inductively
%the variational isometric conditions of higher orders.

Further in Theorem~\ref{proportionalND} we state that a generic $n$-ribbon surface ($n\ge 3$)
has at most one degree of finite and infinitesimal flexibility.
Finally, we
show that a generic $n$-ribbon surface ($n\ge 4$) is infinitesimally or finitely
flexible if and only if all its 3-ribbon subsurfaces are
infinitesimally or finitely flexible (see
Theorems~\ref{n-ribbon-infini} and~\ref{n-ribbon-fini}). We say a
few words in the case of developable semidiscrete surfaces whose
finite isometric deformations have additional surprising properties.

\vspace{2mm}

{\bf Organization of the paper.}
We start in Section~1 with introduction of necessary notions and definitions.
In Section~2 we discuss flexibility of 2-ribbon surfaces.
We study infinitesimal flexibility questions for 2-ribbon surfaces in Subsections~2.2
and~2.3.
In Subsection~2.2 we give a system of differential equations for infinitesimal flexions,
prove the existence of nonzero solutions,
and show that all the solutions are proportional to each other.
In Subsection~2.3 we define the variational operators of infinitesimal flexion which is studied
further in the context of finite flexibility for 2-ribbon surfaces.
In Subsection~2.4 we prove that a generic 2-ribbon surface is
finitely flexible and has one degree of flexibility.
In Section~3 we work with $3$-ribbon surfaces.
After some preliminary statements of Subsection~3.1 we gives a necessary infinitesimal flexibility condition for
3-ribbon surfaces in Subsection~3.2.
In Section 4 we deal with general $n$-ribbon surfaces for $n\ge 3$.
We prove that a generic $n$-ribbon surface
has at most one degree of finite and infinitesimal flexibility in Subsection~4.1.
Further after several preparatory statements of Section~4.2 we prove that
finite or infinitesimal flexibility of generic $n$-ribbon surfaces is identified by
finite or infinitesimal flexibility of all its 3-ribbon subsurfaces.
We conclude the paper with flexibility of developable semidiscrete surfaces in
Section~5. In this case isometric deformations have a remarkable geometric property.

\section{Necessary notions and definitions}

In this section we introduce central notions and definition of the article.

\subsection{Differentiable generic semidiscrete surfaces}
We start with several basic definitions.

\begin{definition}
Let $M=(m_0,\ldots, m_n)$ be the $(n{+}1)$-tuple of non-negative integers.
We say that an $n$-ribbon surface $f$ is a {\it $M$-differentiable} if
for every $i\in\{0,\ldots, n\}$ and $j\in\{1,\ldots, m_i\}$
there exists a continuous derivative $f^{(j)}_i$.
\\
Denote by $C^{m_0,\ldots,m_n}([a,b],\r^3)$ (or $C^M([a,b],\r^3)$, for short) the Banach space of all $M$-differentiable $n$-ribbon surfaces (where $t\in[a,b]$)
with the standard norm
$$
\rho(f,g)=
\max\limits_{i=\{0,\ldots, n\}}
\max\limits_{j=\{1,\ldots, m_i\}}
\sup\limits_{[a,b]}(f^{(j)}_i-g^{(j)}_i).
$$
\end{definition}

\begin{remark}
Note that for two non-negative $(n{+}1)$-tuples $M=(m_0,\ldots, m_n)$ and $K=(k_0,\ldots, k_n)$ satisfying
$$
m_0\ge k_0, \quad \ldots, \quad m_n\ge k_n
$$
in holds
$$
C^{M}([a,b],\r^3)
\subset
C^{K}([a,b],\r^3).
$$
\end{remark}

\begin{definition}
We say that an $n$-ribbon surface $f$ in the space $C^{1,2,2,\ldots, 2,1}([a,b],\r^3)$
is {\it weakly generic} if
for every $t\in [a,b]$ and $i=1,\ldots,n-1$ we have
$$
(\dot f_i,\Delta f_{i-1},\Delta f_{i})\ne 0.
$$
\end{definition}

\begin{definition}
We say that an $n$-ribbon surface $f$ in the space $C^{1,2,2,\ldots, 2,1}([a,b],\r^3)$
is {\it strongly generic} if

--- $f$ is weakly generic;

--- for every $t\in [a,b]$ and $i=1,\ldots,n-1$ we have
$$
\big(\dot f_i(t),\ddot f_i(t),\Delta f_{i-1}(t)\big)\ne 0
\qquad \hbox{and} \qquad
\big(\dot f_{i}(t),\ddot f_{i}(t),\Delta f_{i}(t)\big) \ne 0.
$$
\end{definition}

\subsection{Isometric semidiscrete surfaces}
Let us now study basic properties of the definition of isometric semidiscrete surfaces.

\begin{definition}\label{isometric_conditions}
Two $n$-ribbon surfaces $f$ and $g$ in the space $C^{1,1,\ldots, 1}([a,b],\r^3)$ are said to be {\it isometric} if
$$
\left\{
\begin{array}{l}
|\dot f_i|=|\dot g_i|\\
|\Delta f_{i}|=|\Delta g_{i}|\\
\(\dot f_i,\Delta f_{i-1}\)=\(\dot g_i,\Delta g_{i-1}\)\\
\(\dot f_i,\Delta f_{i}\)=\(\dot g_i,\Delta g_{i}\)\\
\(\dot f_i,\dot f_{i+1}\)=\(\dot g_i,\dot g_{i+1}\)
\end{array}
\right.
$$
(for all admissible $i$ and $t$).
\end{definition}

Before to continue let us show that the conditions of Definition~\ref{isometric_conditions}
are precisely the isometric conditions for ruled surfaces.
Let $f_1$ and $f_2$ be differentiable curves (denote by $\Delta_1 f$ the curve $f_2{-}f_1$).
Let us define a ruled surface
$S(x,t)=xf_1(t)+ (1{-}x)f_2(t)$.
To show that the conditions of Definition~\ref{isometric_conditions}
determine integer geometry we prove the following proposition.

\begin{proposition}
The first fundamental form of the ruled surface $S(x,t)$
is uniquely defined by
$$
|\dot f_1|, \quad |\dot f_2|, \quad |\Delta f_1|, \quad  \(\dot f_1,\Delta f_1\),\quad
\(\dot f_2,\Delta f_1\),\quad  \(\dot f_1,\dot f_2\)
$$
and vice versa.
\end{proposition}

\begin{proof}
Let us write all the coefficients of the first fundamental form of the surfaces in the coordinates $(x,t)$:
$$
\begin{array}{l}
\Big\(\frac{\partial S}{\partial x},\frac{\partial S}{\partial x}\Big\)=\(f_1{-}f_2,f_1{-}f_2\)=|\Delta f_1|^2;
\\
\Big\(\frac{\partial S}{\partial x},\frac{\partial S}{\partial t}\Big\)
=
\big\(f_1{-}f_2,x\dot f_1{+} (1{-}x)\dot f_2\big\)=
x\(\Delta f_1,\dot f_1\)+ (1{-}x)\(\Delta f_1,\dot f_2\);
\\
\Big\(\frac{\partial S}{\partial t},\frac{\partial S}{\partial t}\Big\)=
\big\(x\dot f_1{+} (1{-}x)\dot f_2(t),x\dot f_1{+} (1{-}x)\dot f_2(t)\big\)
\\
\qquad\qquad\hbox{ }
=x^2|f_1|^2+2x(1{-}x)\(\dot f_1,\dot f_2\)+(1{-}x)^2|f_2|^2.
\end{array}
$$
As we see, on the one hand the first fundamental form is defined by the above six functions.
On the other hand the values of the first fundamental form at $x=0,1/2,1$ defines the values of the above six functions.
\end{proof}

\subsection{Deformations and flexions of semidiscrete surfaces}
We start with the following general definition.
\begin{definition}
A {\it deformation} of a semidiscrete $n$-ribbon surface $f$ is a family
of $n$-ribbon surfaces $\{f^\lambda\}$ with parameter $\lambda$ in the segment $[-\Lambda,\Lambda]$
for some positive $\Lambda$ such that $f^0=f$.
In this paper we consider only deformations that are continuously differentiable in $\lambda$.
\end{definition}

\begin{remark}
In this paper $\lambda $ is the parameter of deformations, while
$t$ is the first argument of semidiscrete surfaces.
\end{remark}

%\vspace{1mm}
%
%{\noindent
%The vector function
%$$
%F:[a,b]\times \{0,\ldots, n\}\times[-\Lambda,\Lambda], \quad \hbox{such that} \quad
%F(t,i,\lambda) = f_i^\lambda(t)
%$$
%is called the {\it generating function} for the deformation of $f$. }
%
%
%\begin{definition}
%Let $M\ge (1,2,2,\ldots,2,1)$.
%A deformation of a semidiscrete $n$-ribbon surface $f$ with the generating function $F$ is {\it $M$-differentiable} if (CHECK SMOOTHNESS, IS IT ENOUGH?
%WHERE DO WE NEED THIS??)
%\begin{itemize}
%\item all partial derivatives $\frac{\partial^2}{\partial t \partial \lambda} F$ and $\frac{\partial^2}{\partial \lambda \partial t} F$
%exists and are of $C^{m_i-1}$-class for $i=0,\ldots, n$;
%\item all partial derivatives
%$\frac{\partial^3}{\partial t^2 \partial \lambda} F$,
%$\frac{\partial^3}{\partial t \partial \lambda \partial t} F$, and
%$\frac{\partial^3}{\partial \lambda \partial t^2} F$
%exists and are of $C^{m_i-2}$-class for $i=1,\ldots, n-1$.
%\end{itemize}
%\end{definition}

Let us give a formal definition of deformations that do not change the inner geometry of a surface.

\begin{definition}
We say that a deformation $\{f^\lambda\}$ of a semidiscrete $n$-ribbon surface $f$ is {\it isometric} if
all the surfaces in the deformation are isometric to each other.
\end{definition}

\begin{definition}
Consider a family of functions, vector functions, or semidiscrete surfaces  $\gamma=\{w^\lambda\}$ with parameter $\lambda\in [-\varepsilon,\varepsilon]$
for some positive $\varepsilon$, and let
$w=w^0$.
We say that the derivative
$$
\D w=\frac{\partial w^\lambda}{\partial \lambda}\Big|_{\lambda=0}
$$
is an {\it infinitesimal deformation} of $w$.
\end{definition}

The infinitesimal deformation of an $n$-ribbon surface $f$ in $C^M([a,b],\r^3)$ is an element of the tangent space $T_fC^M([a,b],\r^3)$,
which is naturally isomorphic to $C^M([a,b],\r^3)$.

\begin{definition}
Consider a deformation $\{f^\lambda\}$ of a semidiscrete $n$-ribbon surface $f$ in $C^{(1,2,2,\ldots, 2,1)}([a,b],\r^3)$.
We say that the deformation $\{f^\lambda\}$ is {\it infinitesimally flexible} if
$$
\begin{array}{c}
\D|\dot f_i^\lambda|=0, \qquad  \D|\Delta f_{i}^\lambda|=0,  \qquad \D\(\dot f_i^\lambda,\Delta f_{i-1}^\lambda\),
\\
\D\(\dot f_i^\lambda,\Delta f_{i}^\lambda\)=0, \quad \hbox{and} \quad \D\(\dot f_i^\lambda,\dot f_{i+1}^\lambda\)=0
\end{array}
$$
(for all admissible $i$ and $t$).
\end{definition}

In fact, infinitesimal flexibility is a property of tangent spaces rather than deformations.

\begin{definition}
We say that a tangent vector $\Dd f$ at a semidiscrete surface $f$ is an {\it infinitesimal flexion}
if the deformation $\D f$ where
$$
\gamma(\lambda)=f+\lambda \Dd f
$$
is infinitesimally isometric.
\\
We say that an infinitesimal flexion $\Dd f$ is a {\it finite flexion} if there exists an isometric deformation $\gamma$
with $\gamma(0)=f$ such that $\Dd_{\gamma} f=\Dd f$.
\end{definition}

Finally let us determine isometrically nontrivial infinitesimal flexions.

\begin{definition}\label{def_isom}
An infinitesimal flexion of a weakly generic $n$-ribbon surface $f$ in $C^{0,1,0}([a,b],\r^3)$ is said to be {\it isometrically nontrivial $($trivial$)$ at point
$(t,i)$} for some $t\in[a,b]$ and $n\in\{1,\ldots,n-1\}$ if
the corresponding infinitesimal deformation of the angle between the planes spanned by $(\dot f_i(t)\Delta f_{i-1}(t))$ and
$(\dot f_i(t)\Delta f_{i}(t))$ is nonzero (or zero, respectively).
\\
We say that an infinitesimal flexion of $f$ is {\it isometrically nontrivial} if it is isometrically nontrivial
at least at one point $(t,i)$. Otherwise an infinitesimal inflexion is said to be {\it isometrically trivial}.
\\
We say that an infinitesimal flexion of $f$ is {\it strongly isometrically nontrivial} if it is isometrically nontrivial
at every point $(t,i)$.
\end{definition}

\subsection{Spaces of semidiscrete surfaces with fixed initial position}
In order to calculate the degree of flexibility for a semidiscrete surfaces we should eliminate trivial Euclidean deformations of the surfaces.
Let us do this as follows.

\begin{definition}
Denote by
$$
C_0^M([a,b],\r^3)\subset C^M([a,b],\r^3)
$$
the subset of all {\it 2-ribbon surfaces with fixed initial position},
namely an $n$-ribbon surface $f$ is in $C_0^M([a,b],\r^3)$ if and only if

--- $f_1(0)\in C^M([a,b],\r^3)$;

--- $f_1(0)=(0,0,0)$;

--- the vector $\dot f_1(0)$ is proportional to $(1,0,0)$;

--- the vector $\Delta f_0(0)$ has the coordinates $(p,q,0)$.
\end{definition}

\begin{remark}
Let $\Sigma$ denotes all weakly non-generic semidiscrete surfaces.
Notice that the set $C_0^M([a,b],\r^3)\setminus \Sigma$ has a natural structure of an 8-fold covering of the
quotient space of $C^M([a,b],\r^3)\setminus \Sigma$ by the Euclidean congruence relation.
In other words, for every weakly generic $M$-differentiable semidiscrete surface $f$
there exists exactly eight semidiscrete surfaces that are congruent to $f$.
These 8 surfaces are obtained one from another by 8 symmetries of type
$$
(e_1,e_2,e_3)\to (\pm e_1,\pm e_2, \pm e_3).
$$
So, on the one hand one can consider any branch of the 8-fold for studying flexibility properties of the original $n$-ribbon curve.
On the other hand the set $C_0^M([a,b],\r^3)$ has a structure of a vector space.
For these reasons from now on we prefer to consider the space $C_0^M([a,b],\r^3)$,
rather than the quotient space of $C^M([a,b],\r^3)\setminus \Sigma$ by the group of all Euclidean transformation.
\end{remark}

Since $C_0^M([a,b],\r^3)$ is a subspace of $C^M([a,b],\r^3)$ we have the induced metric and topology
(in particular, $C_0^M([a,b],\r^3)$ is a Banach space),
definitions of deformations, isometric deformations, infinitesimal and finite flexions,
isometrically trivial and nontrivial infinitesimal flexions in $C^M_0([a,b],\r^3)$.

\subsection{Rigid surfaces. Degrees of flexibility}

We start with the definitions for infinitesimal flexibility.

\begin{definition}
The set of infinitesimal flexions in $C^M_0([a,b],\r^3)$ is a linear space.
We say that $f$ has {\it $n$ degrees of infinitesimal flexibility}
if the dimension of the space of infinitesimal flexions is $n$.
If $n=0$ we say that $f$ is {\it infinitesimally rigid}.
\end{definition}

In the finite case we define only finitely rigid surfaces and surfaces that has one degree of finite flexibility.
In order to define finite rigidity we use the following definition.

\begin{definition}
We say that an isometric deformation $\gamma$ of $f$ in $C^M_0([a,b],\r^3)$ is {\it regular} at $0$ if
$\Dd_{\gamma} f\ne 0$.
\end{definition}

\begin{definition}
We say that an $n$-ribbon surface $f$ in $C^M_0([a,b],\r^3)$ is {\it finitely rigid} if
the set of regular isometric deformations of $f$ is empty.
\end{definition}

Let us finally give the definition of the property to have one degree of finite flexibility.
As in infinitesimal case we consider only the space of semidiscrete surfaces with fixed initial position $C_0^M([a,b],\r^3)$.
This cancels excess trivial Euclidean rotations of the whole semidiscrete surface.
Of course, every finite isometric deformations of a semidiscrete surface with fixed initial position still can be reparametrised,
as a result one has another isometric deformation of the surface.
So the best thing would be to try to normalize them.

In this paper we consider the following ``natural parametrization'' of an isometric deformation.
It is clear that for every isometric deformation $\{f^\lambda\}$ in $C_0^M([a,b],\r^3)$
we have
$$
\Dd_{f^\lambda} \dot f(a)=0, \quad \Dd_{f^\lambda} \Delta f_0(a)=0,
\quad \hbox{and} \quad
\Dd_{f^\lambda} \Delta f_1(a)= \alpha(\lambda) \dot f(a) {\times} \Delta f_1(a)
$$
for some real valued function $\alpha$.

\begin{definition}
We say that an isometric deformation $\{f^{\lambda}\}$ is {\it normalized} if and only if
for every admissible values of parameter $\lambda$ we have $\alpha(\lambda)=1$, where $\alpha$ is
the real-valued function defined in the last expression.
\end{definition}

In our case by Corollary~\ref{000} below we have: if $\alpha(\lambda_0)=0$ then $\Dd_{f^\lambda} f^{\lambda_0}=0$.
Hence, there is no regular isometric deformation that preserves the frame at $t=a$.
So we can give the following definition.

\begin{definition}
We say that a weakly generic 2-ribbon surface $f$ has {\it one degree of finite flexibility}
if

--- $f$ has one degree of infinitesimal flexibility.

--- for sufficiently small $\varepsilon>0$ there exists a unique normalized isometric deformation of $f$ defined on $[-\varepsilon, \varepsilon]$.
\end{definition}

\section{Finite and infinitesimal flexibility of 2-ribbon surfaces}

In this section we describe flexions of 2-ribbon surfaces. Such
surfaces are defined by three curves $f_0$, $f_1$, and $f_2$. Our
main goal here is to prove under some natural genericity
assumptions that every 2-ribbon surface is infinitesimally and finitely flexible and has one
degree of infinitesimal and finite flexibility. Our first point is to describe the system of
differential equations (System~A) that determines infinitesimal
flexions corresponding to finite flexions and find solutions to
this system  (see Subsections~\ref{inf_flex_subs}).
We use it to derive finite flexibility in Theorem~\ref{1-d-inf} (also in Subsections~\ref{inf_flex_subs}).
Further via solutions of System~A we define the variational operators of infinitesimal
flexion ${\mathcal V}^{\pm}$ (in Subsection~\ref{var}). Finally, to show
finite flexibility of 2-ribbon surfaces we study Lipschitz
properties for ${\mathcal V}^{\pm}$ and prove flexibility Theorem~\ref{2-ribbon flex} (in Subsection~\ref{finit2r}).

\subsection{Basic relations for infinitesimal flexions}\label{basRel}

In this small subsection we collect some useful relations.

\begin{proposition}
Let $f$ be a 2-ribbon surface in $C^{1,2,1}([a,b],\r^3)$.
Then for every infinitesimal flexion $\Dd f$ the
following properties hold:
\begin{eqnarray}
&&\(\dot{f}_1,\Dd\dot{f}_1\)=0;\label{e1}
\\
&&\(\dot{f}_1-\Delta\dot{f}_0,\Dd\dot{
f}_1-\Dd\Delta\dot{f}_0\)=0;\label{e2}
\\
&&\(\dot{f}_1+\Delta\dot{f}_1,\Dd\dot{
f}_1+\Dd\Delta\dot{f}_1\)=0;\label{e3} \\
&&\(\Delta
{f}_0,\Dd\Delta\dot{f}_0\)+\(\Delta\dot{f}_0,\Dd \Delta
f_0\)=0;\label{e4}
\\
&&\(\Delta {f}_1,\Dd\Delta\dot{f}_1\)+\(\Delta\dot{f}_1,\Dd \Delta
f_1\)=0;\label{e5}
\\
&&\(\dot {f}_1,\Dd\Delta\dot{f}_0\)+\(\Dd\dot f_1,\Delta \dot
f_0\)=0;\label{e8}
\\
&&\(\dot {f}_1,\Dd\Delta\dot{f}_1\)+\(\Dd \dot f_1, \Delta\dot
f_1\)=0;\label{e9}
\\
&&\(\Dd\ddot{f}_1,\Delta {f}_0\)+\(\ddot{f}_1,\Dd \Delta
{f}_0\)=0;\label{e6}
\\
&&\(\Dd\ddot{f}_1,\Delta {f}_1\)+\(\ddot{f}_1,\Dd \Delta
{f}_1\)=0.\label{e7}
\end{eqnarray}
\end{proposition}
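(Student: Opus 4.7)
The plan is to read $\D$ as the derivative $\partial_\lambda|_{\lambda=0}$ along the deformation curve $\gamma$ and exploit two elementary facts: (i) $\D$ is a derivation on scalar products, i.e.\ $\D\langle X,Y\rangle=\langle\D X,Y\rangle+\langle X,\D Y\rangle$; and (ii) $\D$ commutes with the $t$-derivative and with the discrete difference $\Delta$, since these operators act on independent arguments. By the definition of an infinitesimal flexion, $\D$ annihilates every scalar on the preservation list ($|\dot f_i|^2$, $|\Delta f_j|^2$, $\langle\dot f_i,\Delta f_{i-1}\rangle$, $\langle\dot f_i,\Delta f_i\rangle$, $\langle\dot f_i,\dot f_{i+1}\rangle$); and by (ii) the $t$-derivative of any such preserved quantity is also annihilated.

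Given this setup, identities (\ref{e1})--(\ref{e3}) follow from $\D|\dot f_i|^2=0$ for $i=0,1,2$ after substituting $\dot f_0=\dot f_1-\Delta\dot f_0$ and $\dot f_2=\dot f_1+\Delta\dot f_1$ and using (i) to expand. Identities (\ref{e4}) and (\ref{e5}) come from differentiating $|\Delta f_i|^2$ once in $t$ to obtain the preserved scalar $2\langle\Delta f_i,\Delta\dot f_i\rangle$, to which I then apply $\D$.

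For (\ref{e8}) and (\ref{e9}) I use $\Delta\dot f_0=\dot f_1-\dot f_0$ and $\Delta\dot f_1=\dot f_2-\dot f_1$ to rewrite
\[
\langle\dot f_1,\Delta\dot f_0\rangle=|\dot f_1|^2-\langle\dot f_0,\dot f_1\rangle,\qquad
\langle\dot f_1,\Delta\dot f_1\rangle=\langle\dot f_1,\dot f_2\rangle-|\dot f_1|^2,
\]
both of which are combinations of preserved scalars; applying $\D$ via (i) then yields the two identities directly.

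Finally, for (\ref{e6}) and (\ref{e7}) I take the $t$-derivative of the preserved scalars $\langle\dot f_1,\Delta f_0\rangle$ and $\langle\dot f_1,\Delta f_1\rangle$, obtaining $\langle\ddot f_1,\Delta f_j\rangle+\langle\dot f_1,\Delta\dot f_j\rangle$ for $j=0,1$, and apply $\D$: by (i) the first summand unpacks into (\ref{e6}) or (\ref{e7}), while the second is killed by (\ref{e8}) or (\ref{e9}) already established. The whole argument is a chain of applications of (i) and (ii) together with the two substitutions $\dot f_0=\dot f_1-\Delta\dot f_0$ and $\dot f_2=\dot f_1+\Delta\dot f_1$, so no substantive obstacle is expected; the only care needed is bookkeeping which preserved quantity (and how many of its $t$-derivatives) sits behind each identity.
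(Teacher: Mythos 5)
Your proof is correct and follows essentially the same route as the paper: equations (\ref{e1})--(\ref{e3}) from preservation of $|\dot f_i|^2$ after the substitutions $\dot f_0=\dot f_1-\Delta\dot f_0$, $\dot f_2=\dot f_1+\Delta\dot f_1$; (\ref{e4})--(\ref{e5}) and (\ref{e6})--(\ref{e7}) by $t$-differentiating preserved quantities (using (\ref{e8})--(\ref{e9}) to cancel the cross terms in the last two); and (\ref{e8})--(\ref{e9}) by expressing $\langle\dot f_1,\Delta\dot f_j\rangle$ as a combination of preserved scalars. No gaps.
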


\begin{remark}
For a semidiscrete or $n$-ribbon surface $f$ the operations $\Dd$, $\Delta$, and
$\frac{\partial}{\partial t}$ commute, so we do not pay attention
to the order of these operations in compositions.
\end{remark}

\begin{proof}
Equations~(\ref{e1}), (\ref{e2}), and~(\ref{e3})
follow from the fact that infinitesimal
flexions preserve the norms of  $\dot f_1$,
$\dot f_{0}=\dot f_1-\Delta\dot f_0$, and $\dot f_2=\dot f_1+\Delta\dot f_1$ respectively.

The invariance of the lengths of $\Delta f_0$ and $\Delta f_1$
imply Equations~(\ref{e4}), and~(\ref{e5}) respectively. They are equivalent to
$$
 \frac{\partial}{\partial t}\Dd\(\Delta f_0,\Delta f_0\)=0
\quad \hbox{and} \quad
\frac{\partial}{\partial t}\Dd\(\Delta f_1,\Delta f_1\)=0.
$$

Equations~(\ref{e8}) and~(\ref{e9}) follow from invariance of
the angles between the vectors $\dot {f_1}$ and $\Delta \dot {f_{0}}$
and the vectors $\dot {f_1}$ and $\Delta \dot {f_{0}}$.

Let us prove Equation~(\ref{e6}).
Since the angles between the vectors
$\Delta f_0$ and $\dot{f_1}$ are
preserved by infinitesimal flexions we have
$$
\frac{\partial}{\partial t}\Dd\(\dot{f}_1, \Delta f_0\)=0.
$$
Therefore,
$$
\(\Dd\ddot{f}_1, \Delta f_0\)+\(\ddot{f}_1, \Dd\Delta f_0\)+
\(\Dd\dot{f}_1, \Delta \dot f_0\)+\(\dot{f}_1, \Dd\Delta \dot f_0\)=0.
$$
By Equation~(\ref{e8}) we have $\(\Dd\dot{f}_1, \Delta \dot f_0\)+\(\dot{f}_1, \Dd\Delta \dot f_0\)=0$
and hence
$$
\(\Dd\ddot{f}_1, \Delta f_0\)+\(\ddot{f}_1, \Dd\Delta f_0\)=0.
$$
We have arrived at Equation~(\ref{e6}).

Finally Equations~(\ref{e7}) is proved by analogy with Equations~(\ref{e6}).
\end{proof}

\subsection{Infinitesimal flexibility of 2-ribbon surfaces}\label{inf_flex_subs}
Our main goal for this subsection is to prove the following general theorem
\begin{theorem}\label{1-d-inf}
Let $f\in C^{1,2,1}_0([a,b],\r^3)$ be a weakly generic 2-ribbon surface with fixed initial position.
Then $f$ has one degree of infinitesimal flexibility.
\end{theorem}

First we write down and investigate a supplementary system of differential
equations (System~A) which describes infinitesimal flexions of
weakly generic 2-ribbon surfaces.
We also show the uniqueness of the solution of
System~A for a given initial data (Proposition~\ref{111}).
The remaining part of this subsection is dedicated to the proof of
Theorem~\ref{1-d-inf} mentioned above.
In Proposition~\ref{condition} we show
that every infinitesimal flexion satisfies System~A. Then in
Proposition~\ref{existence} we prove that every solution of System~A
with certain initial data is an infinitesimal flexion.  After that we prove
Theorem~\ref{1-d-inf}.

\subsubsection{System A}
Let
\begin{equation}\label{g_i}
\begin{array}{lll}
G_{11}=\(\Dd\dot{ f_1}, \dot f_1\), \quad &
G_{12}=\(\Dd\dot{ f_1}, \Delta f_0\), \quad &
G_{13}=\(\Dd\dot{ f_1}, \Delta f_1\), \\
G_{21}=\(\Dd \Delta f_0, \dot f_1\), \quad &
G_{22}=\(\Dd \Delta f_0,  \Delta f_0\), \quad &
G_{23}=\(\Dd \Delta f_0, \Delta f_1\),\\
G_{31}=\(\Dd \Delta f_1, \dot f_1\), \quad &
G_{32}=\(\Dd \Delta f_1,  \Delta f_0\), \quad &
G_{33}=\(\Dd \Delta f_1, \Delta f_1\).
\end{array}
\end{equation}

Denote by {\it System A} the following system of differential
equations
\begin{equation*}
\left\{
\begin{array}{lll}
\dot G_{11}&=&0,\\
\dot G_{12}&=&\left(
\frac{(\dot f_1,\Delta  \dot f_0,\Delta  f_1)}{(\dot f_1,\Delta
f_0,\Delta  f_1)} +
\frac{(\ddot f_1,\Delta  f_0,\Delta f_1)}{(\dot f_1,\Delta
f_0,\Delta  f_1)}
\right)G_{12}+
\frac{(\dot f_1,\Delta  f_0, \Delta \dot f_0)}{(\dot f_1,\Delta
f_0, \Delta  f_1)}G_{13}-
\frac{(\dot f_1,\Delta  f_0, \ddot f_1)}{(\dot f_1,\Delta  f_0, \Delta  f_1)}G_{23},\\
\dot G_{13}&=&\frac{(\dot f_1,\Delta \dot f_1,\Delta  f_1)}{(\dot
f_1,\Delta f_0,\Delta f_1)}G_{12}+
\left(
\frac{(\dot f_1,\Delta  f_0,\Delta  \dot f_1)}{(\dot f_1,\Delta
f_0,\Delta  f_1)}+
\frac{(\ddot f_1,\Delta  f_0,\Delta  f_1)}{(\dot f_1,\Delta
f_0,\Delta  f_1)}
\right) G_{13}-
\frac{(\dot f_1,\ddot f_1,\Delta  f_1)}{(\dot f_1,\Delta
f_0,\Delta  f_1)}G_{32},\\
\dot G_{21}&=&-\left(
\frac{(\dot f_1,\Delta  \dot f_0,\Delta  f_1)}{(\dot f_1,\Delta
f_0,\Delta  f_1)} +
\frac{(\ddot f_1,\Delta  f_0,\Delta f_1)}{(\dot f_1,\Delta
f_0,\Delta  f_1)}
\right)G_{12}-
\frac{(\dot f_1,\Delta  f_0, \Delta \dot f_0)}{(\dot f_1,\Delta
f_0, \Delta  f_1)}G_{13}+
\frac{(\dot f_1,\Delta  f_0, \ddot f_1)}{(\dot f_1,\Delta  f_0, \Delta  f_1)}G_{23},\\
\dot G_{22}&=&0,\\
\dot G_{23}&=&
-\left(
\frac{(\Delta f_1,\Delta f_0,\dot f_1{\times} \Delta f_0)(\dot
f_1,\Delta \dot f_0, \Delta f_1)}{|\dot f_1{\times} \Delta
f_0|^2(\dot f_1,\Delta f_0, \Delta f_1)}
-\frac{(\dot f_1,\Delta f_1,\dot f_1{\times} \Delta f_0)(\Delta
\dot f_0,\Delta f_0,\Delta f_1)}{|\dot f_1{\times} \Delta
f_0|^2(\dot f_1,\Delta f_0,\Delta f_1)}+\right.
\\&&
\left.
\frac{(\dot f_1,\Delta f_0{\times} \Delta \dot f_0,\Delta
f_1)}{|\dot f_1{\times} \Delta f_0|^2}+
\frac{(\dot f_1{\times} \Delta \dot f_0,\Delta f_0,\Delta
f_1)}{|\dot f_1{\times} \Delta f_0|^2}+ \frac{(\Delta \dot
f_1,\Delta f_0,\Delta f_1)}{(\dot f_1,\Delta f_0,\Delta f_1)}
\right) G_{12}-
\\&&
\left(
\frac{(\Delta f_1,\Delta f_0,\dot f_1{\times} \Delta f_0)(\dot
f_1,\Delta  f_0, \Delta \dot f_0)}{|\dot f_1{\times} \Delta
f_0|^2(\dot f_1,\Delta f_0, \Delta f_1)}
+ \frac{(\dot f_1,\Delta f_0, \Delta f_0{\times} \Delta \dot
f_0)}{|\dot f_1{\times} \Delta f_0|^2}
\right)G_{13}-\\
&&
\left(
\frac{(\dot f_1,\Delta f_1,\dot f_1{\times} \Delta f_0)(\dot
f_1,\Delta f_0, \Delta \dot f_0)}{|\dot f_1{\times} \Delta
f_0|^2(\dot f_1,\Delta f_0,\Delta f_1)}
- \frac{(\dot f_1,\Delta f_0, \dot f_1{\times} \Delta \dot
f_0)}{|\dot f_1{\times} \Delta f_0|^2}
-\frac{(\dot f_1,\Delta f_0, \Delta \dot f_1)}{(\dot f_1,\Delta
f_0, \Delta f_1)}\right)G_{23},
\\
\dot G_{31}&=&-\frac{(\dot f_1,\Delta \dot f_1,\Delta  f_1)}{(\dot
f_1,\Delta f_0,\Delta f_1)}G_{12}-
\left(
\frac{(\dot f_1,\Delta  f_0,\Delta  \dot f_1)}{(\dot f_1,\Delta
f_0,\Delta  f_1)}+
\frac{(\ddot f_1,\Delta  f_0,\Delta  f_1)}{(\dot f_1,\Delta
f_0,\Delta  f_1)}
\right) G_{13}+
\frac{(\dot f_1,\ddot f_1,\Delta  f_1)}{(\dot f_1,\Delta
f_0,\Delta  f_1)}G_{32},\\
\dot G_{32}&=&
-\left(
\frac{(\Delta f_0,\Delta f_1,\dot f_1{\times} \Delta f_1)(\dot
f_1,\Delta  \dot f_1, \Delta f_1)}{|\dot f_1{\times} \Delta
f_1|^2(\dot f_1,\Delta f_0, \Delta f_1)}
+\frac{(\dot f_1,\Delta f_1, \Delta f_1{\times} \Delta \dot
f_1)}{|\dot f_1{\times} \Delta f_1|^2}
\right)G_{12}-\\
&& \left(
\frac{(\Delta f_0,\Delta f_1,\dot f_1{\times} \Delta f_1)(\dot
f_1,\Delta f_0, \Delta \dot f_1)}{|\dot f_1{\times} \Delta
f_1|^2(\dot f_1,\Delta f_0, \Delta f_1)}
-\frac{(\dot f_1,\Delta f_0,\dot f_1{\times} \Delta f_1)(\Delta
\dot f_1,\Delta f_0,\Delta f_1)}{|\dot f_1{\times} \Delta
f_1|^2(\dot f_1,\Delta f_0,\Delta f_1)}+\right.
\\&&
\left.
\frac{(\dot f_1,\Delta f_1{\times} \Delta \dot f_1,\Delta
f_0)}{|\dot f_1{\times} \Delta f_1|^2}+
\frac{(\dot f_1{\times} \Delta \dot f_1,\Delta f_1,\Delta
f_0)}{|\dot f_1{\times} \Delta f_1|^2}+
\frac{(\Delta \dot f_0,\Delta f_0,\Delta f_1)}{(\dot f_1,\Delta
f_0,\Delta f_1)}
\right) G_{13}-
\\
&&
\left(
\frac{(\dot f_1,\Delta f_0,\dot f_1{\times} \Delta f_1)(\dot
f_1,\Delta \dot f_1, \Delta f_1)}{|\dot f_1{\times} \Delta
f_1|^2(\dot f_1,\Delta f_0,\Delta f_1)}
- \frac{(\dot f_1,\Delta f_1, \dot f_1{\times} \Delta \dot
f_1)}{|\dot f_1{\times} \Delta f_1|^2}
-\frac{(\dot f_1,\Delta \dot f_0, \Delta f_1)}{(\dot f_1,\Delta
f_0, \Delta f_1)}\right)G_{32},
\\
\dot G_{33}&=&0.\\
\end{array}
\right.
\end{equation*}

\begin{remark}
In Proposition~\ref{inner} below we show an explicit formula for
the function $G_{23}{+}G_{32}$, it is $\Phi$ in our notation of
Section~2.
\end{remark}

Note also that $\dot G_{12}+ \dot G_{21}=0$ and $\dot G_{13} + \dot G_{31}=0$
in System A.

\begin{example}
Let us consider a simple example of a 2-ribbon curve where
$\dot f$, $\Delta f_0$, and $\Delta f_1$ are all constants.
Let us call these surfaces {\it book-shaped surfaces}. Direct calculations show that
$$
\dot G_{11}=\dot G_{12}=\ldots =\dot G_{33}=0
$$
(this happens, since all the summands in the coefficients of System~A contain
either $\ddot f_1$, or $\Delta \dot f_0$, or $\Delta \dot f_1$ which are all zeroes in our case).
Hence all the scalar products of the deformation with vectors $\dot f_1, \Delta f_0, \Delta f_1$ do not depend on $t$.
Therefore, every element of every isometric deformations of a book-shaped surface is a book-shaped surface.
Here is a typical example of isometric deformation in this class:
$$
f_1^\lambda(t)=(t,0,0), \quad \Delta f_0^\lambda(t)=(0,1,0), \quad \Delta_1^{\lambda}(t)=(0,\sin \lambda,\cos\lambda).
$$
This deformation can be geometrically seen as an opening a museum book with two rigid plastic pages.
\end{example}

In the following proposition we prove that for every single 2-ribbon surface $f$ (not for a
deformation) and initial data for $G_{ij}$ at one point $f(t_0)$
System~A has a unique solution.
Recall that $t$ is an argument of $f$.

\begin{proposition}\label{111}
Let $f$ be a weakly generic 2-ribbon surface in $C^{1,2,1}([a,b],\r^3)$.
For every collection of initial data $G_{ij}(a)=c_{ij}$ there exists a
unique solution of System~A on $[a,b]$.
\end{proposition}

\begin{proof}
System A is the system of homogeneous linear differential equations with smooth variable coefficients (since
$(\dot f_1,\Delta f_0,\Delta f_1)$ never vanishes on $[a,b]$)
and hence for every collection of initial data it has a unique
solution on the segment $[a,b]$.
\end{proof}

\subsubsection{Every infinitesimal flexion satisfies System~A}

Let us show the following statement.

\begin{proposition}\label{condition}
Let $f$ be a weakly generic 2-ribbon surface in $C^{1,2,1}([a,b],\r^3)$.
Then for every infinitesimal flexion $\Dd f$ the functions
$G_{11},G_{12}, \ldots , G_{33}$ satisfy system~A.
\end{proposition}

We start the proof with the following general lemma.

\begin{lemma}\label{additional}
For every infinitesimal flexion $\Dd f$ we have the equalities
$$
G_{11}=G_{22}=G_{33}=0, \qquad G_{12}+G_{21}=0, \quad  \hbox{and} \quad G_{13} +
G_{31}=0.
$$
\end{lemma}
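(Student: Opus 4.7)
The plan is to observe that this lemma is an almost immediate unpacking of the definition of infinitesimal flexion together with the first few identities from the preceding proposition, so the whole argument should reduce to applying the Leibniz rule for $\D$ to the five invariants listed in the definition of an infinitesimal flexion (the norms $|\dot f_i|$, $|\Delta f_i|$ and the inner products $\langle \dot f_i,\Delta f_{i-1}\rangle$, $\langle \dot f_i,\Delta f_i\rangle$, $\langle \dot f_i,\dot f_{i+1}\rangle$), specialised to $i=1$ and $i=0$.

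First I would dispose of $g_1$: by definition $g_1=\langle \D\dot f_1,\dot f_1\rangle$, which is precisely identity~(\ref{e1}) stated in the proposition, so $g_1=0$. For $g_5$ and $g_9$, I would apply $\D$ to the scalar function $|\Delta f_0|^2=\langle \Delta f_0,\Delta f_0\rangle$ (and likewise for $|\Delta f_1|^2$). Since an infinitesimal flexion preserves these norms, $\D |\Delta f_0|^2=0$; expanding by the Leibniz rule yields $2\langle \D\Delta f_0,\Delta f_0\rangle=0$, i.e.\ $g_5=0$, and analogously $g_9=0$.

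The remaining two identities follow the same pattern but now applied to mixed scalar products. Preservation of $\langle \dot f_1,\Delta f_0\rangle$ gives
\begin{equation*}
0=\D\langle \dot f_1,\Delta f_0\rangle=\langle \D\dot f_1,\Delta f_0\rangle+\langle \dot f_1,\D\Delta f_0\rangle=g_2+g_4,
\end{equation*}
and likewise preservation of $\langle \dot f_1,\Delta f_1\rangle$ gives $g_3+g_7=0$. Here I would invoke the remark right after the proposition which states that $\D$ commutes with the relevant operations, so that the bilinearity of the inner product with $\D$ is literally the Leibniz rule above.

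There is essentially no obstacle: every claim of the lemma is simply the differential of one of the five defining invariants of an infinitesimal flexion, read against the appropriate pair of vectors. The only thing to be a little careful about is bookkeeping, in particular noting that $g_4=\langle \D\Delta f_0,\dot f_1\rangle=\langle \dot f_1,\D\Delta f_0\rangle$ (and similarly for $g_7$), so that the symmetric parts really do match the definitions of the $g_i$'s rather than producing spurious sign discrepancies.
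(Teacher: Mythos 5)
Your proof is correct and follows essentially the same route as the paper: the vanishing of $g_1$, $g_5$, $g_9$ comes from the preserved norms, and the relations $g_2+g_4=0$, $g_3+g_7=0$ come from applying $\D$ (via the Leibniz rule) to the preserved inner products $\(\dot f_1,\Delta f_0\)$ and $\(\dot f_1,\Delta f_1\)$. The paper phrases the latter as ``invariance of angles,'' but given the preserved norms this is the same argument.
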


\begin{proof}
The functions $|\dot f_1|$, $|\Delta f_0|$, and $|\Delta f_1|$ are
infinitesimally preserved by infinitesimal flexions, hence $G_{11}$,
$G_{22}$, and $G_{33}$ vanish.

The invariance of angles between $\dot f_1$ and $\Delta f_0$, and
$\dot f_1$ and $\Delta f_1$ yield the equations $G_{12}+G_{21}=0$ and
$G_{13}+G_{31}=0$, respectively.
\end{proof}

{\it Proof of Proposition~\ref{condition}.} From
Lemma~\ref{additional} the functions $G_{11}$, $G_{22}$, and $G_{33}$ are
zero functions, thus $\dot G_{11}$, $\dot G_{22}$, and $\dot G_{33}$
are zero functions as well.

\vspace{2mm}

Let us prove the expression for $\dot G_{12}$ and $\dot G_{13}$. Note
that
$$
\dot G_{12}=\(\Dd \ddot f_1, \Delta f_0\)+\(\Dd \dot f_1,\Delta \dot
f_0\).
$$
Thus Equations~(\ref{e8}) and~({\ref{e6}}) imply
$$
\dot G_{12}=\(\Dd \dot f_1, \Delta\dot f_0\)-\(\ddot f_1, \Dd \Delta
f_0\).
$$
To obtain the expression for $\dot G_{12}$ rewrite $\Delta \dot f_0$
and $\ddot f_1$ in the basis consisting of vectors $\dot f_1$,
$\Delta f_0$, and $\Delta f_1$.
\begin{align*}
\dot G_{12}&=\(\Dd \dot f_1, \Delta\dot f_0\)-\(\ddot f_1, \Dd \Delta
f_0\)
\\
&=
\left(
\frac{(\Delta  \dot f_0,\Delta f_0,\Delta  f_1)}{(\dot f_1,\Delta f_0,\Delta  f_1)}G_{11} +
\frac{(\dot f_1,\Delta  \dot f_0,\Delta  f_1)}{(\dot f_1,\Delta f_0,\Delta  f_1)}G_{12} +
\frac{(\dot f_1,\Delta  f_0, \Delta \dot f_0)}{(\dot f_1,\Delta f_0, \Delta  f_1)}G_{13}
\right)
\\
&\quad-\left(
\frac{(\ddot f_1,\Delta  f_0,\Delta f_1)}{(\dot f_1,\Delta f_0,\Delta  f_1)}G_{21}+
\frac{(\dot f_1,\ddot f_1,\Delta f_1)}{(\dot f_1,\Delta f_0,\Delta  f_1)}G_{22}
+\frac{(\dot f_1,\Delta  f_0, \ddot f_1)}{(\dot f_1,\Delta  f_0, \Delta  f_1)}G_{23}
\right)
\\
&=
\left(
\frac{(\dot f_1,\Delta  \dot f_0,\Delta  f_1)}{(\dot f_1,\Delta f_0,\Delta  f_1)} +
\frac{(\ddot f_1,\Delta  f_0,\Delta f_1)}{(\dot f_1,\Delta f_0,\Delta  f_1)}
\right)G_{12}+
\frac{(\dot f_1,\Delta  f_0, \Delta \dot f_0)}{(\dot f_1,\Delta f_0, \Delta  f_1)}G_{13}-
\frac{(\dot f_1,\Delta  f_0, \ddot f_1)}{(\dot f_1,\Delta  f_0, \Delta  f_1)}G_{23}.
\end{align*}
The last equation holds since $G_{11}=0$, $G_{22}=0$, and $G_{21}=-G_{12}$.

The same strategy works for the functions $\dot G_{13}$.

\vspace{2mm}

Now we study expressions for $\dot G_{21}$ and $\dot G_{31}$. From
Lemma~\ref{additional} we know that $G_{21}=-G_{12}$ and $G_{31}=-G_{13}$ and
hence $\dot G_{21}=-\dot G_{12}$ and $\dot G_{31}=-\dot G_{13}$. Therefore,
the equations for $\dot G_{21}$ and $\dot G_{31}$ are satisfied.

\vspace{2mm}

In order to get the expression for $\dot G_{23}$, we first show
that the function $(\dot f_1, \Delta f_0, \Delta \dot f_0)$ is an invariant of
infinitesimal flexions. Indeed,
$$
(\dot f_1, \Delta f_0, \Delta \dot f_0)=
(\dot f_1, \Delta f_0, \dot f_1{-}\dot f_0)=
-(\dot f_1, \Delta f_0, \dot f_0).
$$
The vectors $\dot f_0$, $\dot f_1$, and $\Delta f_0$ form a rigid frame, hence their triple product
is an invariant  of infinitesimal flexions. Hence the function $(\dot f_1, \Delta f_0, \Delta \dot f_0)$
is an invariant as well.

The infinitesimal flexion invariance of $(\dot f_1, \Delta f_0, \Delta \dot f_0)$  implies that
$\Dd (\dot f_1, \Delta f_0, \Delta \dot f_0)=0$. So we get
$$
(\Dd\dot{ f}_1, \Delta f_0, \Delta \dot f_0)+ (\dot f_1, \Dd \Delta
f_0, \Delta \dot f_0)+ (\dot f_1, \Delta f_0, \Dd \Delta \dot
f_0)=0.
$$
Rewrite
\begin{align*}
(\dot f_1,  \Delta f_0, \Dd \Delta \dot f_0)&=-(\Dd\dot f_1, \Delta
f_0, \Delta \dot f_0)- (\dot f_1, \Dd \Delta f_0, \Delta \dot f_0)
\\&=
-\(\Dd\dot f_1, \Delta f_0{\times} \Delta \dot f_0\)+ \(\Dd \Delta f_0,\dot f_1{\times} \Delta \dot f_0\)\\
&
\begin{array}{l}
=-\frac{(\Delta f_0{\times} \Delta \dot f_0,\Delta f_0,\Delta
f_1)}{(\dot f_1,\Delta f_0,\Delta f_1)}G_{11}-
\frac{(\dot f_1,\Delta f_0{\times} \Delta \dot f_0,\Delta
f_1)}{(\dot f_1,\Delta f_0,\Delta f_1)}G_{12}-
\frac{(\dot f_1,\Delta f_0, \Delta f_0{\times} \Delta \dot
f_0)}{(\dot f_1,\Delta f_0, \Delta f_1)}G_{13}+\\
\quad \frac{(\dot f_1{\times} \Delta \dot f_0,\Delta f_0,\Delta
f_1)}{(\dot f_1,\Delta f_0,\Delta f_1)}G_{21}+
\frac{(\dot f_1,\dot f_1{\times} \Delta \dot f_0,\Delta
f_1)}{(\dot f_1,\Delta f_0,\Delta f_1)}G_{22}+
\frac{(\dot f_1,\Delta f_0, \dot f_1{\times} \Delta \dot
f_0)}{(\dot f_1,\Delta f_0, \Delta f_1)}G_{23}.
\end{array}\\
\end{align*}
Second, we have
$$
\begin{array}{l}
\(\Dd \Delta \dot f_0,\Delta f_0\)=-\({\Dd \Delta f_0},\Delta \dot
f_0\)=
-\frac{(\Delta \dot f_0,\Delta f_0,\Delta f_1)}{(\dot f_1,\Delta
f_0,\Delta f_1)}G_{21}-
\frac{(\dot f_1,\Delta \dot f_0,\Delta f_1)}{(\dot f_1,\Delta
f_0,\Delta f_1)}G_{22}-
\frac{(\dot f_1,\Delta f_0, \Delta \dot f_0)}{(\dot f_1,\Delta
f_0, \Delta f_1)}G_{23} .
\end{array}
$$
Third, we get
$$
\(\Dd\Delta\dot f_0,\dot f_1\)=-\(\Dd\dot f_1,\Delta \dot f_0\)=
\begin{array}{l}
-\frac{(\dot f_1,\Delta  \dot f_0, \Delta f_1)}{(\dot f_1,\Delta
f_0, \Delta f_1)}G_{12}-
\frac{(\dot f_1,\Delta  f_0, \Delta \dot f_0)}{(\dot f_1,\Delta
f_0, \Delta f_1)}G_{13}.
\end{array}
$$
Fourth,
\begin{align*}
\(\Dd \Delta \dot f_0,\Delta f_1\)=&
\begin{array}{l} \frac{(\Delta f_1,\Delta f_0,\dot f_1{\times} \Delta
f_0)}{(\dot f_1,\Delta f_0,\dot f_1{\times} \Delta f_0)}
\(\Dd\Delta \dot f_0,\dot f_1\)+
\frac{(\dot f_1,\Delta f_1,\dot f_1{\times} \Delta f_0)}{(\dot
f_1,\Delta f_0,\dot f_1{\times} \Delta f_0)}\(\Dd\Delta \dot
f_0,\Delta f_0\)+
\end{array}
\\&
\begin{array}{l}
\frac{(\dot f_1,\Delta f_0,\Delta f_1)}{(\dot f_1,\Delta f_0,\dot
f_1{\times} \Delta f_0)}(\dot f_1, \Delta f_0,\Dd\Delta \dot f_0).
\end{array}\\
\end{align*}
After the substitution of the four above expressions and
simplifications we have
\begin{align*}
\(\Dd\Delta \dot f_0,\Delta f_1\)= &\begin{array}{l}
-\left(
\frac{(\Delta f_1,\Delta f_0,\dot f_1{\times} \Delta f_0)(\dot
f_1,\Delta \dot f_0, \Delta f_1)}{|\dot f_1{\times} \Delta
f_0|^2(\dot f_1,\Delta f_0, \Delta f_1)}
-\frac{(\dot f_1,\Delta f_1,\dot f_1{\times} \Delta f_0)(\Delta
\dot f_0,\Delta f_0,\Delta f_1)}{|\dot f_1{\times} \Delta
f_0|^2(\dot f_1,\Delta f_0,\Delta f_1)}+\right.
\end{array}
\\&
\begin{array}{l}
\left.
\frac{(\dot f_1,\Delta f_0{\times} \Delta \dot f_0,\Delta
f_1)}{|\dot f_1{\times} \Delta f_0|^2}+
\frac{(\dot f_1{\times} \Delta \dot f_0,\Delta f_0,\Delta
f_1)}{|\dot f_1{\times} \Delta f_0|^2}
\right) G_{12}-
\end{array}
\\&
\begin{array}{l}
\left(
\frac{(\Delta f_1,\Delta f_0,\dot f_1{\times} \Delta f_0)(\dot
f_1,\Delta  f_0, \Delta \dot f_0)}{|\dot f_1{\times} \Delta
f_0|^2(\dot f_1,\Delta f_0, \Delta f_1)}
+ \frac{(\dot f_1,\Delta f_0, \Delta f_0{\times} \Delta \dot
f_0)}{|\dot f_1{\times} \Delta f_0|^2}
\right)G_{13}-
\end{array}\\
&\begin{array}{l}
\left(
\frac{(\dot f_1,\Delta f_1,\dot f_1{\times} \Delta f_0)(\dot
f_1,\Delta f_0, \Delta \dot f_0)}{|\dot f_1{\times} \Delta
f_0|^2(\dot f_1,\Delta f_0,\Delta f_1)}
-
\frac{(\dot f_1,\Delta f_0, \dot f_1{\times} \Delta \dot
f_0)}{|\dot f_1{\times} \Delta f_0|^2}
\right)G_{23}.
\end{array}
\end{align*}
Further, decomposing the vector $\Delta \dot f_1$ into basis vectors $\dot f_1$, $\Delta
f_0$, and $\Delta f_1$ we get
$$
\begin{array}{l}
\(\Dd \Delta f_0, \Delta \dot f_1\)=
\frac{(\Delta \dot f_1,\Delta f_0,\Delta f_1)}{(\dot f_1,\Delta
f_0,\Delta f_1)}G_{21}+
\frac{(\dot f_1,\Delta \dot f_1,\Delta f_1)}{(\dot f_1,\Delta
f_0,\Delta f_1)}G_{22}+
\frac{(\dot f_1,\Delta f_0, \Delta \dot f_1)}{(\dot f_1,\Delta
f_0, \Delta f_1)}G_{23}.
\end{array}
$$
From the last two identities, by substituting  $G_{22}=0$ and
$G_{21}=-G_{12}$ (see Lemma~\ref{additional}), we obtain the expression
for
$$
\dot G_{23}=\frac{\partial}{\partial t}\(\Dd \Delta f_0, \Delta f_1\)=
\(\Dd \Delta \dot f_0, \Delta f_1\)+\(\Dd \Delta f_0, \Delta \dot f_1\).
$$

The expression for $\dot G_{32}$ is calculated in a similar way. This
concludes the proof. \qed

\vspace{2mm}

\subsubsection{Existence of infinitesimal flexions}

Let us prove that every solution of System~A with certain
initial data determines an infinitesimal flexion.

\begin{proposition}\label{existence}
Let $f$ be a weakly generic 2-ribbon surface in $C^{1,2,1}([a,b],\r^3)$.
Then

{\noindent
$($i$)$ For an arbitrary nonzero $\alpha$
there exists a unique tangent vector $\Dd f$ at $f$ satisfying System~A and the boundary conditions
$$
\Dd\dot{ f_1}(a)=0,  \quad \Dd \Delta f_0(a)=0, \quad  \hbox{and}
\quad \Dd \Delta f_1(a)=\alpha \dot f_1(a){\times} \Delta f_1(a).
$$
}

{
\noindent
$($ii$)$ This tangent vector is an infinitesimal flexion.
}
\end{proposition}

\begin{remark}
Here and below, for a function $f$ defined on $[a,b]$ by $\dot f(a)$ we mean the one-sided derivative at $a$.
\end{remark}

\begin{proof} We start with Proposition~\ref{existence}$(i)$.
Consider three vectors
$$
v_1=0,  \quad v_2=0, \quad
\hbox{and} \quad v_3=\alpha \dot f_1(a){\times} \Delta f_1(a).
$$
Denote
$$
\begin{array}{lll}
c_{11}=\(v_1, \dot f_1\), \quad &
c_{12}=\(v_1, \Delta f_0\), \quad &
c_{13}=\(v_1, \Delta f_1\), \\
c_{21}=\(v_2, \dot f_1\), \quad &
c_{22}=\(v_2,  \Delta f_0\), \quad &
c_{23}=\(v_2, \Delta f_1\),\\
c_{31}=\(v_3, \dot f_1\), \quad &
c_{32}=\(v_3,  \Delta f_0\), \quad &
c_{33}=\(v_3, \Delta f_1\).
\end{array}
$$
By Proposition~\ref{111} there exists a unique solution $(G_{11}, G_{12},\ldots, G_{33})$ satisfying the initial conditions $G_{ij}(a)=c_{ij}$.
For every point $t\in [a,b]$ the values $\Dd \dot f_1$, $\Dd \Delta f_0$, and $\Dd \Delta f_1$ of the tangent vector $\Dd f$
are uniquely defined in the basis $(\dot f_1, \Delta f_0, \Delta f_1)$ by Equations~(\ref{g_i}):
here we substitute the solution of System~A with the initial conditions $G_{ij}(a)=c_{ij}$
to the right hand side of Equations~(\ref{g_i}).
Hence, there exists a unique tangent vector $\Dd f$ of $f$ satisfying System~A and the boundary conditions
$$
\Dd\dot{ f_1}(a)=0,  \quad \Dd \Delta f_1(a)=0, \quad  \hbox{and}
\quad \Dd \Delta f_0(a)=\alpha \dot f_1(a){\times} \Delta f_0(a).
$$
This concludes the proof of the fist item of the proposition.

\vspace{2mm}

{
\noindent
{\it Proof of Proposition~\ref{existence}$($ii$)$.}
By the definition of an infinitesimal flexion
it is enough to check that the following 11 functions are
preserved by the infinitesimal deformation:
$$
|\dot f_i|, \qquad  |\Delta f_{i}|,  \qquad \(\dot f_i,\Delta
f_{i-1}\), \qquad\(\dot f_i,\Delta f_{i}\), \quad \hbox{and} \quad
\(\dot f_i,\dot f_{i+1}\)
$$
(for all possible admissible $i$).
}

\vspace{2mm}

{\it Invariance of $|\dot f_1|$, $|\Delta f_0|$, $|\Delta f_1|$,
$\(\dot f_1,\Delta f_0\)$, and $\(\dot f_1,\Delta f_1\)$}.

From System~A we have
$$
\dot G_{11}=0, \quad \dot G_{22} =0, \quad  \dot G_{33} =0, \quad \dot G_{21}
+\dot G_{12} =0 , \quad \dot G_{31} +\dot G_{13} =0,
$$
and hence the functions
$$
\begin{array}{c}
\Dd(|\dot f_1|^2)=2G_{11}; \quad \Dd(|\Delta f_0|^2)=2G_{22}; \quad \Dd(|\Delta f_1|^2)=2G_{33};\\
\Dd\(\dot f_1, \Delta f_0 \)=G_{12}+G_{21}, \quad \hbox{and} \quad
\Dd\(\dot f_1, \Delta f_1 \)=G_{31}+G_{13}
\end{array}
$$
are constant functions. So
it is enough to show that they vanish at some point: we show this
at point $a$.
$$
\begin{array}{l}
\begin{array}{l}
\Dd\(\dot f_1(a), \dot f_1 (a)\)=2\(\Dd \dot f_1(a), \dot f_1
(a)\)=2\(0, \dot f_1(a)\)=0;
\end{array}
\\
\begin{array}{l}
\Dd\(\Delta f_0(a), \Delta f_0 (a)\)=
2\(\Dd\Delta f_0(a), \Delta f_0 (a)\)=
2(0,\Delta f_0 (a)\)=0;
\end{array}
\\
\begin{array}{l}
\Dd\(\Delta f_1(a), \Delta f_1 (a)\)=
2\(\Dd\Delta f_1(a), \Delta f_1 (a)\)=
2\(\alpha \dot f_1(a){\times} \Delta f_1(a), \Delta f_1 (a)\)=0;
\end{array}
\\
\begin{array}{l}
\Dd\(\dot f_1(a), \Delta f_0 (a)\)=\(\D\dot f_1(a), \Delta f_0
(a)\)+ \(\dot f_1(a), \Dd\Delta f_0 (a)\)=\(0, \Delta f_0 (a)\)+
\\
\qquad \qquad \qquad \qquad \quad \(\dot f_1(a), 0\)=0.
\end{array}
\\
\begin{array}{l}
\Dd\(\dot f_1(a), \Delta f_1 (a)\)=\(\Dd\dot f_1(a), \Delta f_1
(a)\)+ \(\dot f_1(a), \Dd\Delta f_1 (a)\)=\(0, \Delta f_0 (a)\)+
\\
\qquad \qquad \qquad \qquad \quad \(\dot f_1(a), \alpha \dot
f_1(a){\times} \Delta f_1(a)\)=0;
\end{array}
\end{array}
$$

\vspace{2mm}

{\it Invariance of $\(\dot f_0, \Delta f_0 \)$ and $\(\dot f_2,
\Delta f_1 \)$}. Note that
$$
\(\dot f_0, \Delta f_0 \)=-\frac{1}{2} \frac{\partial}{\partial t}
\(\Delta f_0, \Delta f_0\)+ \(\dot f_1,\Delta f_0\).
$$
Hence by the above item we have
$$
\Dd\(\dot f_0,\Delta f_0\)=-\frac{1}{2} \frac{\partial}{\partial t}
\Dd\(\Delta f_0, \Delta f_0\)+ \Dd\(\dot f_1,\Delta f_0\)=-\frac{1}{2} \frac{\partial}{\partial t}(0)+0=0.
$$
Similar reasoning shows that $\Dd\(\dot f_2,\Delta f_1\)=0$.

\vspace{2mm}

{\it Invariance of $\(\dot f_0,\dot f_1\)$ and $\(\dot f_1,\dot
f_2\)$}. Let us prove that $\Dd\(\dot f_0,\dot f_1\)=0$. First,
note that
$$
\(\Dd \dot f_0, \dot f_1\)=\(\Dd\dot f_1, \dot f_1\)-\(\Dd \Delta
\dot f_0, \dot f_1\)=-\(\Dd \Delta \dot f_0, \dot f_1\)= \(\Dd\Delta
f_0, \ddot f_1\)-\frac{\partial}{\partial t}\(\Dd\Delta f_0, \dot
f_1\).
$$
Recall that $\frac{\partial}{\partial t}\(\Dd\Delta f_0, \dot
f_1\)=\dot G_{21}=-\dot G_{12}$. Let us substitute the expression for
$\dot G_{12}$ of System~A and rewrite $\ddot f_1$ in the basis of
vectors $\dot f_1$, $\Delta f_0$, and $\Delta f_1$. One obtains

\begin{align*}
\(\Dd \dot f_0, \dot f_1\)&=\(\Dd\Delta f_0, \ddot f_1\)+\dot G_{12}
\\
&=
\frac{(\dot f_1,\Delta  \dot f_0,\Delta  f_1)}{(\dot f_1,\Delta
f_0,\Delta  f_1)}\(\Dd\dot f_1,\Delta f_0\)+
\frac{(\dot f_1,\Delta  f_0, \Delta \dot f_0)}{(\dot f_1,\Delta
f_0, \Delta  f_1)}\(\Dd\dot f_1,\Delta f_1\)
\\
&=\(\Dd \dot f_1,\Delta
\dot f_0\)=-\(\Dd \dot f_1,\dot f_0\).\\
\end{align*}
Hence
$$
\Dd\(\dot f_0,\dot f_1\)=
\(\Dd \dot f_0, \dot f_1\)+\(\Dd \dot f_1,\dot f_0\)=
-\(\Dd \dot f_1,\dot f_0\)+\(\Dd \dot f_1,\dot f_0\)
=0.
$$
Therefore, $\(\dot f_0,\dot f_1\)$ is invariant under the
infinitesimal deformation. The proof of the invariance of $\(\dot
f_1,\dot f_2\)$ is analogous.

\vspace{2mm}

{\it Invariance of $\(\dot f_0,\dot f_0\)$ and $\(\dot f_2,\dot
f_2\)$}. Let us prove that $\Dd\(\dot f_0,\dot f_0\)=0$.

$$
\Dd\(\dot f_0,\dot f_0\)=2\(\Dd\dot f_0,\dot f_0\)=
2\(\Dd\Delta \dot f_0,\Delta \dot f_0\)+2\Dd\(\dot f_1, \dot f_0\)-
2\(\Dd \dot f_1,\dot f_1\).
$$
We have already shown that $\Dd\(\dot f_1, \dot f_0\)=0$ and $\(\Dd
\dot f_1,\dot f_1\)=0$. Hence
$$
\Dd\(\dot f_0,\dot f_0\)=2\(\Dd\Delta\dot f_0,\Delta\dot f_0\).
$$
We rewrite the last $\Delta\dot f_0$ in the last expression in the
basis $\dot f_1, \Delta f_0, \dot f_1 {\times}\Delta f_0$ and get

\begin{align}\label{eqS}
\begin{array}{l}
(\Dd\Delta\dot f_0,\Delta\dot f_0\)=
\begin{array}{l}
\frac{(\Delta \dot f_0,\Delta  f_0, \dot f_1 {\times}\Delta
f_0)}{(\dot f_1,\Delta  f_0, \dot f_1 {\times}\Delta
f_0)}\(\Dd\Delta\dot f_0,\dot f_1\)+
\frac{(\dot f_1,\Delta\dot  f_0, \dot f_1 {\times}\Delta
f_0)}{(\dot f_1,\Delta  f_0, \dot f_1 {\times}\Delta
f_0)}\(\Dd\Delta\dot f_0,\Delta f_0\)+
\end{array}\\
\qquad\qquad\qquad\quad
\begin{array}{l}
\frac{(\dot f_1,\Delta  f_0, \Delta\dot f_0)}{(\dot f_1,\Delta
f_0, \dot f_1 {\times}\Delta f_0)}(\Dd\Delta\dot f_0,\dot
f_1,\Delta f_0).
\end{array}
\end{array}
\end{align}

Let us rewrite $\(\Dd\Delta\dot f_0,\dot f_1\)$, $\(\Dd\Delta\dot
f_0,\Delta f_0\)$, and $(\Dd\Delta\dot f_0,\dot f_1,\Delta f_0)$ in
terms of $G_{11}, \ldots, G_{33}$. First, we have:
$$
\(\Dd\Delta\dot f_0,\dot f_1\)=\(\Dd\dot f_0,\dot f_1\)=-\(\Dd\dot
f_1,\dot f_0\)=-\(\Dd\dot f_1,\Delta \dot f_0\).
$$
The second equality holds since we have shown that $\Dd\(\dot f_0,
\dot f_1\)=0$. If we rewrite $\Delta \dot f_0$ in the basis $\dot
f_1, \Delta f_0, \Delta f_1$, we get the following:
$$
\begin{array}{l}
\(\Dd\Delta \dot f_0,\dot f_1\)=-\(\Dd\dot f_1,\Delta \dot f_0\)=-\frac{(\dot f_1,\Delta  \dot f_0, \Delta
f_1)}{(\dot f_1,\Delta f_0, \Delta f_1)}G_{12}-
\frac{(\dot f_1,\Delta  f_0, \Delta \dot f_0)}{(\dot f_1,\Delta
f_0, \Delta f_1)}G_{13}.
\end{array}
$$
Second, we have
$$
\begin{array}{l}
\(\Dd \Delta \dot f_0,\Delta f_0\)=-\({\Dd \Delta f_0},\Delta \dot
f_0\)=
\frac{(\Delta \dot f_0,\Delta f_0,\Delta f_1)}{(\dot f_1,\Delta
f_0,\Delta f_1)}G_{12}-
\frac{(\dot f_1,\Delta f_0, \Delta \dot f_0)}{(\dot f_1,\Delta
f_0, \Delta f_1)}G_{23}.
\end{array}
$$
Third, with
\begin{align*}
\dot G_{23}-\(\Dd \Delta f_0,\Delta \dot f_1\) =&\(\Dd \Delta \dot
f_0,\Delta f_1\)=
\begin{array}{l} \frac{(\Delta f_1,\Delta f_0,\dot f_1{\times} \Delta
f_0)}{(\dot f_1,\Delta f_0,\dot f_1{\times} \Delta f_0)}
\(\Dd\Delta \dot f_0,\dot f_1\)+
\end{array}
\\&
\begin{array}{l}
\frac{(\dot f_1,\Delta f_1,\dot f_1{\times} \Delta f_0)}{(\dot
f_1,\Delta f_0,\dot f_1{\times} \Delta f_0)}\(\Dd\Delta \dot
f_0,\Delta f_0\)+
\frac{(\dot f_1,\Delta f_0,\Delta f_1)}{(\dot f_1,\Delta f_0,\dot
f_1{\times} \Delta f_0)}(\Dd\Delta \dot f_0,\dot f_1, \Delta f_0).
\end{array}\\
\end{align*}
and the expression for $\dot G_{23}$ of System~A we get:
\begin{align*}
(\Dd \Delta \dot f_0, \dot f_1,  \Delta f_0)= &
\begin{array}{l}
-\left(\frac{(\dot f_1{\times} \Delta \dot f_0,\Delta f_0,\Delta
f_1)}{(\dot f_1,\Delta f_0,\Delta f_1)}+
\frac{(\dot f_1,\Delta f_0{\times} \Delta \dot f_0,\Delta
f_1)}{(\dot f_1,\Delta f_0,\Delta f_1)}\right)G_{12}-
\frac{(\dot f_1,\Delta f_0, \Delta f_0{\times} \Delta \dot
f_0)}{(\dot f_1,\Delta f_0, \Delta f_1)}G_{13}+
\end{array}
\\
&
\begin{array}{l}
\frac{(\dot f_1,\Delta f_0, \dot f_1{\times} \Delta \dot
f_0)}{(\dot f_1,\Delta f_0, \Delta f_1)}G_{23}.
\end{array}\\
\end{align*}
Finally, we substitute the obtained last three expressions for
$$
\(\Dd\Delta\dot f_0,\dot f_1\), \quad \(\Dd\Delta\dot f_0,\Delta f_0\), \quad
\hbox{and} \quad
(\Dd\Delta\dot f_0,\dot f_1,\Delta f_0)
$$
respectively to Expression~(\ref{eqS}) and arrive at
\begin{align*}
\(\Dd\Delta\dot f_0,\Delta\dot f_0\)=&
\begin{array}{l}
\left(-\frac{(\Delta \dot f_0,\Delta  f_0, \dot f_1 {\times}\Delta
f_0)(\dot f_1,\Delta  \dot f_0, \Delta f_1)}{(\dot f_1,\Delta f_0,
\dot f_1 {\times}\Delta f_0)(\dot f_1,\Delta f_0, \Delta f_1)}
+\frac{(\dot f_1,\Delta\dot  f_0, \dot f_1 {\times}\Delta
f_0)(\Delta \dot f_0,\Delta f_0,\Delta f_1)}{(\dot f_1,\Delta f_0,
\dot f_1 {\times}\Delta f_0)(\dot f_1,\Delta f_0,\Delta f_1)}-
\right.
\end{array}
\\ &
\begin{array}{l}
\left.
\frac{(\dot f_1,\Delta  f_0, \Delta\dot f_0)(\dot f_1{\times}
\Delta \dot f_0,\Delta f_0,\Delta f_1)}{(\dot f_1,\Delta f_0, \dot
f_1 {\times}\Delta f_0)(\dot f_1,\Delta f_0,\Delta f_1)}
-\frac{(\dot f_1,\Delta  f_0, \Delta\dot f_0)(\dot f_1,\Delta
f_0{\times} \Delta \dot f_0,\Delta f_1)}{(\dot f_1,\Delta f_0,
\dot f_1 {\times}\Delta f_0)(\dot f_1,\Delta f_0,\Delta f_1)}
\right) G_{12}+
\end{array}
\\ &
\begin{array}{l}
\left(
-\frac{(\Delta \dot f_0,\Delta  f_0, \dot f_1 {\times}\Delta
f_0)(\dot f_1,\Delta  f_0, \Delta \dot f_0)}{(\dot f_1,\Delta f_0,
\dot f_1 {\times}\Delta f_0)(\dot f_1,\Delta f_0, \Delta f_1)}
-\frac{(\dot f_1,\Delta  f_0, \Delta\dot f_0)(\dot f_1,\Delta f_0,
\Delta f_0{\times} \Delta \dot f_0)}{(\dot f_1,\Delta f_0, \dot
f_1 {\times}\Delta f_0)(\dot f_1,\Delta f_0, \Delta f_1)}
\right)G_{13}+
\end{array}
\\ &
\begin{array}{l}
\left(
-\frac{(\dot f_1,\Delta\dot  f_0, \dot f_1 {\times}\Delta
f_0)(\dot f_1,\Delta f_0, \Delta \dot f_0)}{(\dot f_1,\Delta  f_0,
\dot f_1 {\times}\Delta f_0)(\dot f_1,\Delta f_0, \Delta f_1)}
+\frac{(\dot f_1,\Delta  f_0, \Delta\dot f_0)(\dot f_1,\Delta f_0,
\dot f_1{\times} \Delta \dot f_0)}{(\dot f_1,\Delta f_0, \dot f_1
{\times}\Delta f_0)(\dot f_1,\Delta f_0, \Delta f_1)}
\right)G_{23}.
\end{array}
\end{align*}

It is clear that the coefficients of $G_{13}$ and $G_{23}$ vanish
identically. Let us study the coefficient of $G_{12}$.

Consider the following mixed product $(\Delta \dot f_0,\Delta \dot
f_0, \dot f_1 {\times}\Delta f_0)$, it is identical to zero. Let
us rewrite $\Delta \dot f_0$ in the second position of the mixed
product in the basis $\dot f_0$, $\Delta f_0$, $\Delta f_1$. We
get the relation
$$
\begin{array}{l}
\frac{(\Delta \dot f_0,\Delta  f_0, \Delta  f_1)}{(\dot f_1,\Delta
f_0, \Delta f_1)}(\Delta \dot f_0,\dot f_1, \dot f_1
{\times}\Delta f_0)+
\frac{(\dot f_1,\Delta \dot f_0, \Delta  f_1)}{(\dot f_1,\Delta
f_0, \Delta  f_1)}(\Delta \dot f_0,\Delta f_0, \dot f_1
{\times}\Delta f_0)\\
=-\frac{(\dot f_1,\Delta  f_0, \Delta \dot f_0)}{(\dot f_1,\Delta
f_0, \Delta  f_1)}(\Delta \dot f_0,\Delta f_1, \dot f_1
{\times}\Delta f_0).
\end{array}
$$
We apply this identity to the first two summands of the
coefficient of $G_{12}$ and get the following expression for the
coefficient of $G_{12}$:
$$
\begin{array}{l}
\frac{(\dot f_1,\Delta  f_0, \Delta \dot f_0)(\Delta \dot
f_0,\Delta f_1, \dot f_1 {\times}\Delta f_0)}{(\dot f_1,\Delta
f_0, \Delta f_1)|\dot f_1{\times} \Delta f_0|^2}-
\frac{(\dot f_1,\Delta f_0 {\times}\Delta \dot f_0,\Delta
f_1)(\dot f_1,\Delta f_0, \Delta \dot f_0)}{(\dot f_1,\Delta f_0,
\Delta f_1)|\dot f_1{\times} \Delta f_0|^2}-
\frac{(\dot f_1 {\times} \Delta \dot f_0,\Delta f_0, \Delta
f_1)(\dot f_1,\Delta  f_0, \Delta \dot f_0)}{(\dot f_1,\Delta f_0,
\Delta f_1)|\dot f_1{\times} \Delta f_0|^2}.
\end{array}
$$
We rewrite this as
$$
\begin{array}{l}
\frac{(\dot f_1,\Delta  f_0, \Delta \dot f_0)}{(\dot f_1,\Delta
f_0, \Delta f_1)|\dot f_1{\times} \Delta f_0|^2}
\Big( (\Delta \dot f_0,\Delta f_1, \dot f_1 {\times}\Delta f_0){-}
(\dot f_1,\Delta f_0 {\times}\Delta \dot f_0,\Delta f_1){-}
(\dot f_1 {\times} \Delta \dot f_0,\Delta f_0, \Delta f_1)
\Big).
\end{array}
$$
Let us study the expression in the brackets.
$$
\begin{array}{l}
(\Delta \dot f_0,\Delta f_1, \dot f_1 {\times}\Delta f_0){-}
(\dot f_1,\Delta f_0 {\times}\Delta \dot f_0,\Delta f_1){-}
(\dot f_1 {\times} \Delta \dot f_0,\Delta f_0, \Delta f_1)=
\\
-\big(\Delta\dot f_0{\times}(\dot f_1 {\times}\Delta f_0)+
\dot f_1{\times}( \Delta f_0 {\times} \Delta\dot f_0)+
\Delta f_0{\times}(\Delta \dot f_0 {\times}\dot f_1 ),
\Delta f_1 \big)=(0,\Delta f_1)=0.
\end{array}
$$
The second equality holds by the Jacobi identity. Hence the
coefficient of $G_{12}$ is zero. Therefore,
$$
\Dd\(\dot f_0,\dot f_0\)=
2\(\Dd\Delta \dot f_0,\Delta \dot f_0\)=0,
$$
and $\(\dot f_0,\dot f_0\)$ is invariant under the infinitesimal
deformation.

The proof of the invariance of $\(\dot f_2,\dot f_2\)$ repeats the
proof for $\(\dot f_0,\dot f_0\)$.

So we have checked the invariance of all the 11 functions in the
definition of an infinitesimal flexion.
Hence $\Dd f$ is an infinitesimal flexion.
\end{proof}

Now we have all the ingredients to prove the main theorem of this subsection.

\subsubsection{Conclusion of the proof of Theorem~\ref{1-d-inf}}
{\it Existence.} The existence of an infinitesimal flexion follows directly from Proposition~\ref{existence}$(i)$.

\vspace{2mm}

{\noindent
{\it Uniqueness.} By Proposition~\ref{condition} every infinitesimal flexion satisfies System~A.
Since we consider 2-ribbon surfaces with fixed initial position, for every non-zero infinitesimal flexion $\Dd f$ we have:
$$
\Dd\dot{ f_1}(a)=0,  \quad \Dd \Delta f_0(a)=0, \quad  \hbox{and}
\quad \Dd \Delta f_1(a)=\alpha \dot f_1(a){\times} \Delta f_1(a)
$$
for some non-zero $\alpha$.
Hence by Proposition~\ref{existence} this is one of the flexions of Proposition~\ref{existence}$(i)$.
So the set of infinitesimal flexions is one-dimensional. Since the set is a linear space, it is
a line.
Hence $f$ has one degree of  infinitesimal flexibility.
\qed
}

Theorem~\ref{1-d-inf} together with Proposition~\ref{existence} imply the following.
\begin{corollary}\label{000}
Let $f\in C_0^{1,2,1}([a,b],\r^3)$ be a weakly generic 2-ribbon surface with fixed initial position,
and let $\Dd f$ be its infinitesimal flexion satisfying
$$
\Dd\dot{ f_1}(a)=0,  \quad \Dd \Delta f_1(a)=0, \quad  \hbox{and} \quad \Dd \Delta f_0(a)=0,
$$
Then $\Dd f=0$.
\qed
\end{corollary}

%%%%%%%%%%%%%%%%%%%%%%%%%%%%%%%%%%%%%%%%%%%%%%%
%%%%%%%%%%%%%%%%%%%%%%%%%%%%%%%%%%%%%%%%%%%%%%%

\subsection{Variational operators of infinitesimal flexions}\label{var}

Let us fix an orthonormal basis $(e_1,e_2,e_3)$ in $\r^3$.
Denote by $\Omega^1_{3{\times} 3}$ the Banach space
$$
\big((C^1[a,b])^3\big)^3\cong (C^1[a,b])^9
$$
with the norm
$$
\|(h_{11},h_{12},\ldots,h_{33})\|=\max\limits_{1\le i,j \le 3}(\max(\sup
|h_{ij}|,\sup |\dot h_{ij}|)).
$$

Consider the following map
$$
Z:C^{1,2,1}([a,b],\r^3)\to\Omega^1_{3{\times}3},
$$
where for a 2-ribbon surface $f$  the image $Z(f)$ in the basis $(e_1,e_2,e_3)$ is defined as
$$
\begin{array}{r}
\dot f_1(t)=\big(h_{11}(t),h_{12}(t),h_{13}(t)\big),
\\
\Delta f_0(t)=\big(h_{21}(t),h_{22}(t),h_{23}(t)\big),
\\
\Delta f_1(t)=\big(h_{31}(t),h_{32}(t),h_{33}(t)\big).
\end{array}
$$

Note that every 2-ribbon surface $f$ is defined by
$\dot f_1$, $\Delta f_0$, and $\Delta f_1$ up to a translation.
So after fixing, say, $f_1(a)=(0,0,0)$ one has a bijection.

We say that a point $h=(h_{11}, h_{12},\ldots, h_{33})$ in $\Omega^1_{3{\times}3}$ is in {\it general
position} if the determinant
$$
\det \left(
\begin{array}{ccc}
h_{11}& h_{12} & h_{13}\\
h_{21}& h_{22} & h_{23}\\
h_{31}& h_{32} & h_{33}\\
\end{array}
\right)\ne 0
$$
for every $t\in [a,b]$. This condition obviously
corresponds to the weakly genericity condition, i.e., to
$$
(\dot f_1,\Delta f_0,\Delta f_1)\ne 0.
$$

Denote by $\Sigma_\Omega$ the set of all points $h$ that are not in general position.

\vspace{2mm}

\begin{definition}\label{nu}
Denote by ${\mathcal V}^{\pm}:[0,\Lambda]\times (\Omega^1_{3{\times}3}\setminus \Sigma_\Omega) \to
\Omega^1_{3{\times}3}$ two variational {\it operators of infinitesimal
flexion} in coordinates $(h_{11},h_{12},\ldots, h_{33})$:
\begin{align}\label{e11}
\begin{array}{ll}
{\mathcal V}^{\pm}_{l{-}1,m}(\lambda, h)=&
\displaystyle\frac{(e_m,\Delta f_0,\Delta f_1)}{(\dot f_1,\Delta
f_0,\Delta f_1)}G_{l{-}1,1}(h)+
\frac{(\dot f_1,e_m,\Delta f_1)}{(\dot f_1,\Delta f_0,\Delta
f_1)}G_{l{-}1,2}(h)+\\
& \displaystyle\frac{(\dot f_1,\Delta f_0, e_m)}{(\dot f_1,\Delta
f_0, \Delta f_1)}G_{l{-}1,3}(h).
\end{array}
\end{align}
for ($1\le l,m\le 3$). Here $G_{11}(h),G_{12}(h), \ldots, G_{33}(h)$ is a
solution of System~A at point $f$ with the initial conditions
corresponding to
$$
\Dd\dot{ f_1}(a)=0,  \quad \Dd \Delta f_0(a)=0, \quad  \hbox{and}
\quad \Dd \Delta f_1(a)=\pm \dot f_1(a){\times} \Delta f_1(a),
$$
i.e.,
\begin{equation}\label{normal_condition}
\begin{array}{lll}
G_{11}(a)=0, \quad &
G_{12}(a)=0, \quad &
G_{13}(a)=0, \\
G_{21}(a)=0, \quad &
G_{22}(a)=0, \quad &
G_{23}(a)=0,\\
G_{31}(a)=0, \quad &
G_{32}(a)=\pm(\dot f_1(a), \Delta f_0(a), \Delta f_1(a)), \quad &
G_{33}(a)=0.
\end{array}
\end{equation}
(Here we take ``$+$'' sign for ${\mathcal V}^+$ and ``$-$'' for ${\mathcal V}^-$.)
\end{definition}

Note that both
${\mathcal V}^{+}$ and ${\mathcal V}^{-}$ are autonomous operators, they do not depend on time parameter
$\lambda$.

It is important that the following statement holds.
\begin{proposition}
Let $h$ be a point of $\Omega^1_{3{\times}3}$ in general position and $\lambda \in [0,\Lambda]$. Then we have
$$
{\mathcal V}^{\pm}(\lambda,h) \in \Omega^1_{3{\times}3}.
$$
\end{proposition}

\begin{proof}
The proof is straightforward, all functions involved in Expression~(\ref{e11}) are continuously differentiable,
and hence both
$
{\mathcal V}^{+}(\lambda,h)
$
and
$
{\mathcal V}^{-}(\lambda,h)
$
are continuously differentiable.
\end{proof}

\begin{remark} Let us show in brief how to find the coordinates of
the  infinitesimal deformation $\Dd f$ in the basis $e_1, e_2, e_3$ satisfying
$$
\Dd f_1(a)=0, \qquad \Dd \dot f_1 (a)=0, \qquad
\Dd\Delta f_0(a)=0,
\quad \hbox{and} \quad
\Dd \Delta f_1(a)=\dot f_1(a){\times} \Delta f_1(a).
$$
First, one should solve System~A with the above initial data, then
substitute the obtained solution $(G_{11},G_{12},\ldots, G_{33})$ to
Equations~(\ref{e11}). Now we have the coordinates of $\Dd \dot
f_1$, $\Dd\Delta f_0$, and $\Dd\Delta f_1$. Having the additional
condition $\Dd f_1(a)=0$ one can construct $\Dd f_1$, $\Dd f_0$, and
$\Dd f_2$:
$$
\Dd f_1(t_0)=\int\limits_{a}^{t_0}\Dd \dot f_1(t)d(t),
\quad \Dd f_0=\Dd f_1 - \Dd \Delta f_0,
\quad \Dd f_2 =\Dd f_1 + \Dd \Delta f_1.
$$
\end{remark}

Further we will work in the following subspace of
$\Omega^1_{3{\times}3}$. Denote
$$
\tilde \Omega^1_{3{\times}3}= \big\{h\in \Omega^1_{3{\times}3}\big|h_{12}(a)=h_{13}(a)=h_{23}(a)=0\big\}.
$$
It is clear that $\tilde \Omega^1_{3{\times}3}$ is a Banach space itself.

We have the following important property of $\tilde \Omega^1_{3{\times}3}$.
\begin{proposition}
For every $\lambda \in [0,\Lambda]$ and $h\in \tilde\Omega^1_{3{\times}3}\setminus\Sigma_\Omega$
the subspace $\tilde \Omega^1_{3{\times}3}$ is an invariant space of the operators ${\mathcal V}^+ (\lambda, h)$ and ${\mathcal V}^- (\lambda, h)$.
\end{proposition}

\begin{proof}
From the conditions
$$
\Dd\dot{ f_1}(a)=0,  \quad  \hbox{and}
\quad
\Dd \Delta f_0(a)=0
$$
we have $G_{ij}(a)=0$ for all $i = 1,2$, and $j=1,2,3$. Hence by Expression~(\ref{e11})
$$
{\mathcal V}_{11}^{\pm} (\lambda, h)(a)={\mathcal V}_{12}^{\pm} (\lambda, h)(a)=\ldots ={\mathcal V}^{\pm}_{23} (\lambda, h)(a)=0
$$
for all $\lambda\in [0,\Lambda]$ and $h\in \Omega^1_{3{\times}3}$.
Therefore, for every $\lambda\in [0,\Lambda]$ and $h\in \tilde \Omega^1_{3{\times}3}\setminus\Sigma_\Omega$ we
have ${\mathcal V}^{\pm}(\lambda, h)\in \tilde \Omega^1_{3{\times}3}$.
\end{proof}

Finally we have the following important statement.

\begin{proposition}
The map $Z$ is a bijection of $\tilde \Omega^1_{3{\times}3}$ and $C^{1,2,1}_0([a,b],\r^3)$.
\end{proposition}

\begin{proof}
The inverse map $Z^{-1}(h)=(f_0,f_1,f_2)$ is defined as
$$
f_1(t_0)=\int\limits_{a}^{t_0}
\left(
{\begin{array}{c}
h_{11}(t)\\
h_{12}(t)\\
h_{13}(t)
\end{array}
}
\right)
dt,
\hbox{ }
f_0(t_0)=f_1(t_0)-
\left(
{\begin{array}{c}
h_{21}(t_0)\\
h_{22}(t_0)\\
h_{23}(t_0)
\end{array}
}
\right),
\hbox{ }
f_2(t_0)=
\left(
{\begin{array}{c}
h_{31}(t_0)\\
h_{32}(t_0)\\
h_{33}(t_0)
\end{array}
}
\right)
-f_1(t_0).
$$
at every $t_0\in [a,b]$.
\end{proof}

%%%%%%%%%%%%%%%%%%%%%%%%%%%%%%%%%%%%%%%%%%%%%%%
%%%%%%%%%%%%%%%%%%%%%%%%%%%%%%%%%%%%%%%%%%%%%%%

\subsection{Finite flexibility of 2-ribbon surfaces}\label{finit2r}

In Subsection~\ref{inf_flex_subs} we showed that every $2$-ribbon surface in
general position is infinitesimally flexible and that the space of its
infinitesimal flexions is one-dimensional. The aim of this
subsection is to show that a weakly generic 2-ribbon surface
is finitely flexible and has one degree of finite flexibility.

\vspace{2mm}

\subsubsection{Lipschitz condition}
We start with the discussion of the initial value problem for the
following two differential equations on the set of all points
$\tilde\Omega^1_{3{\times}3}$ in general position (here $\lambda$ is the time
parameter):
\begin{equation}\label{mathcalV}
\frac{\partial h}{\partial \lambda}={\mathcal V}^{+} (\lambda, h)
\quad \hbox{and} \quad
\frac{\partial h}{\partial \lambda}={\mathcal V}^{-} (\lambda, h)
.
\end{equation}
To solve the initial value problem we study local Lipschitz
properties for ${\mathcal V}^{+}$ and ${\mathcal V}^{-}$.

\begin{definition}\label{defODE}
Consider a Banach space $E$ with a norm $| *|_E$, and a positive real number $\Lambda$. Let $U$ be a
subset of $[0,\Lambda]\times E$. We say that a functional
${\mathcal F}:U\to E$ {\it locally satisfies a Lipschitz
condition} if for every point $(\lambda_0,p)$ in $U$ there exist a
neighborhood $V$ of the point and a constant $K$ such that for every
pair of points $(\lambda,p_1)$ and $(\lambda,p_2)$ in $V$ the
inequality
$$
| {\mathcal F}(\lambda,p_1)-{\mathcal F}(\lambda,p_2)|_E\le
K|p_1-p_2 |_E
$$
holds.
\end{definition}

\vspace{2mm}

First we verify a Lipschitz condition for the following operator.
Define ${\mathcal G}:[0,\Lambda]\times \tilde\Omega^1_{3{\times}3} \to \tilde\Omega^1_{3{\times}3}$
by
$$
\quad {\mathcal G}_{ij} (\lambda, h)=G_{ij}(h), \quad 1 \le i,j\le 3,
$$
where $G_{ij}(h)$ are defined by Equations~(\ref{g_i}).

\begin{lemma}\label{g1}
For every point $h\in U$ in general position,
there exists a neighborhood $V_h$ of $h$ such that
the functional ${\mathcal G}$ locally satisfies a Lipschitz
condition in $[0,\Lambda] \times V_h$.
\end{lemma}

\begin{proof}
Consider a point $h\in U$. The element $(G_{11},G_{12},\ldots, G_{33})$ itself
satisfies a system of linear differential equations (System~A).
The coefficients of this system depend only on a point of
$\tilde\Omega_{3{\times}3}^1$. Since the point $h$ is in general position, there
exists an integer constant $K$ such that for a sufficiently small
neighborhood $V_h$ of $h$ the dependence is $K$-Lipschitz, i.e.,
for $p$ and $q$ from $V_h$ every coefficient $c$ of System~A satisfies the
inequality
$$
|c(p)-c(q)|<K \|p-q\|.
$$
Hence the solutions for $t\in [a,b]$ satisfy the Lipschitz
condition for a fixed initial data on $V_h$. (This is clear from the fact that
the solution of the system with small coefficients $c(p)-c(q)$
will be almost constant, the difference in each coordinate will not be greater than $9(b-a)K\|p-q\|$.)
Finally the solution for $t\in [a,b]$ satisfies the Lipschitz
condition for a fixed parameter and different initial data on $V_h$ (See Proposition~1.10.1 in~\cite{Car}).
Therefore, for some constants $\bar K_l$ we have
$$
\sup(|G_{ij}(p)-G_{ij}(q)|)<\bar K_{ij} \|p-q\|, \quad 1\le i,j\le 3.
$$
From System~A we know that the $\dot G_{i,j}$ linearly depend on $G_{11},G_{12},
\ldots, G_{33}$, therefore, we get the Lipschitz condition for the
derivatives: for some constants $\tilde K_l$ we have
$$
\sup(|\dot G_{ij}(p)-\dot G_{ij}(q)|)<\tilde K_{ij} \|p-q\|, \quad
1\le i,j \le 3.
$$
Thus there exists a real number $\hat K$ such that for all points
$p$ and $q$ in $V_h$,
$$
\|{\mathcal G}(\lambda, p)- {\mathcal G}(\lambda,
q)\|=\max\limits_{1\le i,j \le 3}\big(\max\big(\sup
|G_{ij}(p)-G_{ij}(q)|,\sup |\dot G_{ij}(p)-\dot G_{ij}(q)|\big)\big)<{\hat K}
\|p-q\|.
$$
Thus ${\mathcal G}$ satisfies a Lipschitz condition on
$V_h$.
Therefore, ${\mathcal G}$ satisfies a Lipschitz condition on
$[0,\Lambda] \times V_h$ (since ${\mathcal G}$ is autonomous).
\end{proof}

Lemma~\ref{g1} and Expression~(\ref{e11}) directly imply the
following statement.

\begin{corollary}\label{d1}
For every point $h\in U$ in general position and,
there exists a neighborhood $V_h$ of $h$ such that
both functionals ${\mathcal V}^{+}$ and ${\mathcal V}^{-}$ locally satisfy a Lipschitz
condition in $[0,\Lambda] \times V_h$. \qed
\end{corollary}

%%%%%%%%%%%%%%%%%%%%%%%%%%%%%%%%%%%%%%%%%%%%%%%
%%%%%%%%%%%%%%%%%%%%%%%%%%%%%%%%%%%%%%%%%%%%%%%

\subsubsection{Existence and uniqueness of solutions}
Let us prove the following general statement.

\begin{proposition}\label{euV+}
Let $h_0\in\tilde \Omega^1_{3{\times}3}$ be in general position.
Then for sufficiently small positive $\varepsilon$ there exists a unique solution $\gamma$ of the equation
\begin{equation}\label{eqqqqType2}
\frac{\partial h}{\partial \lambda}=
{\mathcal V}^{+}(\lambda,h)
\end{equation}
on $[-\varepsilon,\varepsilon]$, such that $\gamma(0)=h_0$.
\end{proposition}

We start with the following general lemma.
\begin{lemma}\label{euV+Lemma}
Let $h_0\in\tilde \Omega^1_{3{\times}3}$ be in general position.
A deformation $\gamma$ with $\gamma(0)=h_0$ is a solutions of Equation~$($\ref{eqqqqType2}$)$
if and only if $\gamma$ satisfies
$$
\frac{\partial \gamma}{\partial \lambda}
=
\left\{
\begin{array}{ll}
{\mathcal V}^{+} (\lambda, \gamma(\lambda)) &\hbox{ for  all $\lambda \in [0,\Lambda]$},
\\
-{\mathcal V}^- (-\lambda, \gamma(\lambda)) & \hbox{ for  all $\lambda \in [-\Lambda,0]$}.
\end{array}
\right.
$$
\end{lemma}

\begin{proof}
The proof of the Lemma is straightforward.
\end{proof}

{
\noindent {\it Proof of Proposition~\ref{euV+}.}
As we showed in Corollary~\ref{d1}, the operators ${\mathcal V^+}$ and ${\mathcal V^-}$
satisfy a Lipschitz condition in some neighborhood of the point $Z^{-1}(f)$.
From the general theory of differential equations on Banach spaces (see
for instance the first section of the second chapter
of~\cite{Car}) it follows that this condition implies local
existence and uniqueness of a solution of the initial value
problem for the differential Equations~(\ref{mathcalV}).
Hence by Lemma~\ref{euV+Lemma}
for a sufficiently small positive $\varepsilon$ there exists a unique solution
$\gamma$ of Equation~(\ref{eqqqqType2}) on $[-\varepsilon,\varepsilon]$ satisfying the condition $\gamma(0)=h_0$.
\qed
}

%%%%%%%%%%%%%%%%%%%%%%%%%%%%%%%%%%%%%%%%%%%%%%%
%%%%%%%%%%%%%%%%%%%%%%%%%%%%%%%%%%%%%%%%%%%%%%%

\subsubsection{Finite flexibility}
The key point for finite flexibility of 2-ribbon surfaces is the following lemma.

\begin{lemma}\label{Zgamma}
Let $\{f^\lambda\}$, $\lambda \in [-\varepsilon, \varepsilon]$ for some $\varepsilon>0$,
be a normalized isometric deformation in the space $C^{1,2,1}_0([a,b],\r^3)$
such that all 2-ribbon surfaces of the family are weakly generic.
Let also $\{Z^{-1}(f^\lambda)\}$
be the corresponding deformation in $\tilde \Omega^1_{3{\times}3}$.
Then $\{f^\lambda\}$ is an isometric deformation if and only $\{Z^{-1}(f^\lambda)\}$ satisfies
Equation$($\ref{eqqqqType2}$)$ for  all $\lambda \in [-\varepsilon,\varepsilon]$.
\end{lemma}

\begin{proof}
Let $\gamma=\{f^\lambda\}$, $\lambda \in [-\varepsilon, \varepsilon]$, be a normalized isometric deformation in $C^{1,2,1}_0([a,b],\r^3)$
such that all 2-ribbon surfaces of the family are weakly generic.
Every normalized deformation satisfies Condition~(\ref{normal_condition})
at every point $\lambda\in [-\varepsilon,\varepsilon]$ with the positive choice of the sign.

Since $\gamma$ is an isometric deformation, $\Dd_\gamma f^\lambda$ is an infinitesimal flexion at every point $\lambda\in [-\varepsilon, \varepsilon]$.
Hence the corresponding functions $G_{ij}^\lambda$ satisfy system~A (by Proposition~\ref{condition}).
Let us now write $\D\dot f_1^\lambda$, $\D \Delta f_0^\lambda$, and $\D\Delta f_1^\lambda$ in the basis $e_1$, $e_2$, $e_3$
using functions $G_{ij}^\lambda$. Recall that
$$
\D\dot f_1^\lambda=
(\D \dot f_1^\lambda, \dot f_1^\lambda) \dot f_1^\lambda+
(\D \dot f_1^\lambda, \Delta f_0^\lambda) \Delta f_0^\lambda+
(\D \dot f_1^\lambda, \Delta f_1^\lambda) \Delta f_1^\lambda
=
G_{11}\dot f_1^\lambda+G_{12}\Delta f_0^\lambda+G_{13}\Delta f_1^\lambda.
$$
Hence in the basis $(e_1,e_2,e_3)$ we have
\begin{equation}\label{eee}
\begin{array}{l}
\displaystyle
\frac{\partial \dot f_1^\lambda}{\partial\lambda}=\D\dot f_1^{\lambda}\\
\displaystyle
\quad\quad
=\sum\limits_{m=1}^{3}
\Bigg(
\frac{(e_m,\Delta f_0^\lambda,\Delta f_1^\lambda)}{(\dot f_1^\lambda,\Delta f_0^\lambda,\Delta f_1^\lambda)}G_{11}^\lambda+
\frac{(\dot f_1^\lambda,e_m,\Delta f_1^\lambda)}{(\dot f_1^\lambda,\Delta f_0^\lambda,\Delta f_1^\lambda)}G_{12}^\lambda+
\frac{(\dot f_1^\lambda,\Delta f_0^\lambda, e_m)}{(\dot f_1^\lambda,\Delta f_0^\lambda, \Delta f_1^\lambda)}G_{13}^\lambda
\Bigg)e_m\\
\quad\quad
=\sum\limits_{m=1}^{3}
\Big({\mathcal V}_{1,m}^+ \big(\lambda, Z^{-1}(f^\lambda)\big)\Big)e_m.
\end{array}
\end{equation}
Similarly we have:
\begin{equation}\label{eee2}
\begin{array}{l}
\displaystyle
\frac{\partial \Delta f_0^\lambda}{\partial\lambda}=
\sum\limits_{m=1}^{3}
\Big({\mathcal V}_{2,m}^+ \big(\lambda, Z^{-1}(f^\lambda)\big)\Big)e_m,
\\
\displaystyle
\frac{\partial \Delta f_1^\lambda}{\partial\lambda}=
\sum\limits_{m=1}^{3}
\Big({\mathcal V}_{3,m}^+ \big(\lambda, Z^{-1}(f^\lambda)\big)\Big)e_m.
\end{array}
\end{equation}

%\begin{array}{l}
%\displaystyle
%\D\Delta f_0=
%\sum\limits_{m=1}^{3}
%\Bigg(
%\frac{(e_m,\Delta f_0,\Delta f_1)}{(\dot f_1,\Delta f_0,\Delta f_1)}G_{21}+
%%
%\frac{(\dot f_1,e_m,\Delta f_1)}{(\dot f_1,\Delta f_0,\Delta f_1)}G_{22}+
%%
%\frac{(\dot f_1,\Delta f_0, e_m)}{(\dot f_1,\Delta f_0, \Delta f_1)}G_{23}
%\Bigg)e_m,
%\\
%\displaystyle
%\D\Delta f_0=
%\sum\limits_{m=1}^{3}
%\Bigg(
%\frac{(e_m,\Delta f_0,\Delta f_1)}{(\dot f_1,\Delta f_0,\Delta f_1)}G_{31}+
%%
%\frac{(\dot f_1,e_m,\Delta f_1)}{(\dot f_1,\Delta f_0,\Delta f_1)}G_{32}+
%%
%\frac{(\dot f_1,\Delta f_0, e_m)}{(\dot f_1,\Delta f_0, \Delta f_1)}G_{33}
%\Bigg)e_m.
%\end{array}

Hence by Definition~\ref{nu} the corresponding derivatives in the space $\tilde\Omega^1_{3{\times}3}$
satisfy:
$$
\frac{\partial Z^{-1}(f^\lambda)}
{\partial \lambda}
={\mathcal V}^{+} (\lambda, Z^{-1}(f^\lambda))
$$
for every $\lambda\in [-\varepsilon,\varepsilon]$.

\vspace{2mm}

Conversely, let $Z^{-1}(f^\lambda)$ satisfy
$$
\frac{\partial Z^{-1}(f^\lambda)}
{\partial \lambda}
={\mathcal V}^+ (\lambda, Z^{-1}(f^\lambda))
$$
for every $\lambda\in [-\varepsilon,\varepsilon]$.
Then the corresponding $\D\dot f_1^\lambda$, $\D\Delta f_0^\lambda$, and $\D \Delta f_1^\lambda$
are defined as in~(\ref{eee}) and~(\ref{eee2}).
Hence the correspondent scalar products $G_{ij}^\lambda$ satisfy System~A for $\lambda\in [-\varepsilon,\varepsilon]$.
Thus $\D f^{\lambda}$ is an infinitesimal flexion for every $\lambda \in [-\varepsilon,\varepsilon]$.
Hence by $\{f^{\lambda}\}$ is an isometric deformation on $[-\varepsilon,\varepsilon]$.
From construction it follows that $\{f^{\lambda}\}$ is a normalized deformation.
\end{proof}

Now we prove the following theorem on finite flexibility of
weakly generic 2-ribbon surfaces.

\begin{theorem}\label{2-ribbon flex}
Every 2-ribbon weakly generic semidiscrete surface $f$ in $C^{1,2,1}_0([a,b],\r^3)$ has one degree of finite flexibility.
\end{theorem}

\begin{proof}
On the one hand,
by Lemma~\ref{Zgamma} normalized isometric deformations of $f$ with a fixed initial position
are in one-to-one correspondence with Solution of~Equation~(\ref{eqqqqType2}) satisfying $\gamma(0)=Z^{-1}(f)$.
On the other hand,
by Proposition~\ref{euV+} for sufficiently small positive $\varepsilon$ there exists a unique solution $\gamma$
of Equation~(\ref{eqqqqType2}) satisfying $\gamma(0)=Z^{-1}(f)$.
Hence,
there exists a unique normalized isometric deformations of $f$ (with the parameter in $[-\varepsilon,\varepsilon]$
for sufficiently small positive $\varepsilon$).
Therefore, $f$ has one degree of finite flexibility.
\end{proof}

\begin{remark}
In fact, one can prove the statement of Theorem~\ref{2-ribbon flex} for the spaces of functions $C^{m,m{+}1,m}_0([a,b],\r^3)$
for arbitrary $m\ge 1$. We are not going to use this later so we omit the details here.
The proofs mostly repeat the ones for the case $m=1$ shown in details above.
\end{remark}

%%%%%%%%%%%%%%%%%%%%%%%%%%%%%%%%%%%%%%%%%%%%%%%%%%%%%%%%%%%%%%%%%%%%%%%
%%%%%%%%%%%%%%%%%%%%%%%%%%%%%%%%%%%%%%%%%%%%%%%%%%%%%%%%%%%%%%%%%%%%%%%

\section{Infinitesimal flexibility of $3$-ribbon surfaces}

In this section we find necessary infinitesimal flexibility condition of
$3$-ribbon surfaces. For the case of $n$-ribbon surfaces each 3-ribbon subsurface gives
a condition of infinitesimal flexibility.

\subsection{Preliminary statements on infinitesimal flexion of 3-ribbon surfaces}
In this subsection we prove certain relations that we use further
in the proof of the statement on infinitesimal flexibility
condition for 3-ribbon surfaces.

%\begin{remark}
%As we have shown in Section~1 the notions of finite flexibility
%and infinitesimal flexibility coincide for the 2-ribbon case.
%Still in this subsection we say {\it infinitesimal flexions} of a
%2-ribbon surface to indicate that an infinitesimal flexion of an
%$n$-ribbon surface coincides with finite flexions of all its
%2-ribbon surfaces.
%\end{remark}

Consider the following function
$$
\Phi=\(\Delta f_0,\Delta f_1\).
$$
This function plays a central role in our further description of
the infinitesimal flexibility condition of $3$-ribbon and $n$-ribbon surfaces
(see Theorem~\ref{inf} and Theorem~\ref{n-ribbon-infini}). Let
$\Dd\Phi$ be the infinitesimal flexion of $\Phi$. Via the function $\Dd \Phi$
we describe monodromy conditions for finite flexibility.
Proposition~\ref{inner} and Corollary~\ref{discr} deliver
necessary tools to describe continuous and discrete parts of the
monodromy condition on $\Phi$.

\begin{remark}
In the proofs of the statements of this subsection
we fix the flexion of the initial frame at $t=a$ in the following way
$$
\Dd \dot f_1=\Dd \Delta f_1(t_0)=0
$$
(compare to the space $C^M_0{[a,b],\r^3}$ where $\Dd\dot f_1(a)=\Dd \Delta f_0(a)=0$ instead).
This simplifies calculations for the 3-ribbon surfaces, since the fixed bar with endpoints $f_1(a)$
and $f_2(a)$ belongs to the middle strip.
\end{remark}

\subsubsection{Continuous shift}

Here we study the dependence of the infinitesimal flexion $\Dd
\Phi$ on the argument $t$.

\begin{proposition}\label{inner} {\bf (On continuous shift.)}
Let $f$ be a weakly generic 2-ribbon surface in $C^{1,2,1}([a,b],\r^3)$.
Then for every infinitesimal flexion $\Dd \Phi$ the following condition holds:
$$
\Dd\Phi(t_2)=\Dd\Phi(t_1)\cdot\exp{\left(\int\limits_{t_1}^{t_2}\frac
{(\dot f_1,\Delta \dot f_0,\Delta f_1)
+(\dot f_1, \Delta f_0,\Delta \dot f_1)}
{(\dot f_1,\Delta f_0,\Delta f_1)}dt\right)}.
$$
\end{proposition}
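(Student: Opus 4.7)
The plan is to reduce the statement to a first-order linear ODE for $\D\Phi$ in the variable $t$. From the definitions in~(\ref{g_i}) one has
$$
\D\Phi=\(\D\Delta f_0,\Delta f_1\)+\(\Delta f_0,\D\Delta f_1\)=g_6+g_8,
$$
and the exponential appearing in the conclusion is the unique solution of
$$
\tfrac{d}{dt}\D\Phi=H_1(t)\cdot\D\Phi,\qquad H_1(t):=\tfrac{(\dot f_1,\Delta\dot f_0,\Delta f_1)+(\dot f_1,\Delta f_0,\Delta\dot f_1)}{(\dot f_1,\Delta f_0,\Delta f_1)},
$$
on $[t_1,t_2]$. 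So it is enough to establish this ODE and then integrate.

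An infinitesimal flexion of the 3-ribbon surface restricts to an infinitesimal flexion of its first 2-ribbon subsurface $(f_0,f_1,f_2)$. By Proposition~\ref{condition} the functions $g_1,\ldots,g_9$ attached to this subsurface satisfy System~A, and by Lemma~\ref{additional} one may substitute $g_1=g_5=g_9=0$, $g_4=-g_2$, $g_7=-g_3$. With these substitutions the System~A expressions for $\dot g_6$ and $\dot g_8$ become linear combinations of $g_2,g_3,g_6$ and $g_2,g_3,g_8$ respectively, so $\dot g_6+\dot g_8$ is a linear combination of $g_2,g_3,g_6,g_8$ only. To prove the desired identity $\dot g_6+\dot g_8=H_1(g_6+g_8)$ it then suffices to check: \emph{(i)} the coefficient of $g_6$ in $\dot g_6$ and the coefficient of $g_8$ in $\dot g_8$ both equal $H_1$; \emph{(ii)} the coefficients of $g_2$ and of $g_3$ in $\dot g_6+\dot g_8$ vanish.

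Step~\emph{(i)} is a direct algebraic calculation: expanding $\Delta\dot f_0$ in the basis $(\dot f_1,\Delta f_0,\Delta f_1)$ and using the Gram identity $\(X{\times}Y,X{\times}Z\)=|X|^2\(Y,Z\)-\(X,Y\)\(X,Z\)$, the three terms forming the $g_6$-coefficient of $\dot g_6$ telescope exactly to $H_1$; the $\dot g_8$-case is obtained by the symmetry $\Delta f_0\leftrightarrow\Delta f_1$. Step~\emph{(ii)} is the main obstacle, because the $g_2$- and $g_3$-coefficients in $\dot g_6$ and $\dot g_8$ are very long mixed-product expressions. Their cancellation follows from the Jacobi identity
$$
\Delta\dot f_0\times(\dot f_1\times\Delta f_0)+\dot f_1\times(\Delta f_0\times\Delta\dot f_0)+\Delta f_0\times(\Delta\dot f_0\times\dot f_1)=0
$$
dotted with $\Delta f_1$ (and its analog with $\Delta f_1,\Delta\dot f_1$ in place of $\Delta f_0,\Delta\dot f_0$, dotted with $\Delta f_0$), in precisely the same bookkeeping fashion as was used to annihilate the $g_2$-coefficient at the end of the proof of Proposition~\ref{existence}. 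Once~\emph{(i)} and~\emph{(ii)} are verified, the linear ODE $\dot{\D\Phi}=H_1\D\Phi$ holds on $[t_1,t_2]$ and integration gives the exponential formula.
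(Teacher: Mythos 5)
Your argument is correct, but it reaches the key differential relation $\frac{d}{dt}\D\Phi=H_1\,\D\Phi$ by a genuinely different route than the paper. The paper isolates this relation as Lemma~\ref{rel} and proves it pointwise: at each $t_0$ it normalizes the flexion by an isometry so that $\D\dot f_1(t_0)=0$ and $\D\Delta f_1(t_0)=0$, which forces $\D\Delta f_0(t_0)=\alpha\,\dot f_1(t_0){\times}\Delta f_0(t_0)$, and then evaluates the four summands of $\D\dot\Phi(t_0)$ one by one from the basic relations of Subsection~1.1; in that gauge most terms vanish outright and the computation stays short. You instead work in an arbitrary gauge and read the relation off System~A via $\D\Phi=g_6+g_8$, reducing everything to your coefficient identities (i) and (ii). Those identities do hold: (i) follows from decomposing $\Delta\dot f_0$ (resp.\ $\Delta\dot f_1$) in the basis $\dot f_1$, $\Delta f_0$, $\Delta f_1$, upon which the two cross-product terms cancel and the diagonal coefficient collapses to $H_1$; and for (ii) the $\Delta\dot f_0$-dependent part of the $g_2$-coefficient of $\dot g_6$ vanishes by itself via the same Jacobi/Lagrange bookkeeping used at the end of the proof of Proposition~\ref{existence}, while its $\Delta\dot f_1$-dependent part cancels against the $g_2$-coefficient of $\dot g_8$ (and symmetrically for $g_3$). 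But you leave both at the level of a sketch, and they carry essentially all of the computational weight that the paper's gauge-fixing is designed to avoid. What your route buys is that it makes explicit the identification $\D\Phi=g_6+g_8$ promised in the remark after System~A and shows that the continuous-shift formula is already encoded in that system; the cost is a dependence on Proposition~\ref{condition} and a heavier, only partially executed, algebraic verification.
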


This is a direct consequence of the next lemma.

\begin{lemma}~\label{rel}
Let $f$ be a weakly generic 2-ribbon surface in $C^{1,2,1}([a,b],\r^3)$,
then
$$
\Dd\dot{\Phi}=\frac
{(\dot f_1,\Delta \dot f_0,\Delta f_1)
+(\dot f_1, \Delta f_0,\Delta \dot f_1)}
{(\dot f_1,\Delta f_0,\Delta f_1)}\Dd\Phi.
$$
\end{lemma}

\begin{proof}
\begin{subequations}
Note that
$$
\begin{array}{l}
\Dd\Phi=\(\Dd \Delta f_0,\Delta f_1\)+\(\Delta f_0,\Dd \Delta f_1\), \qquad \hbox{and}\\
\Dd\dot{\Phi}=\(\Dd\Delta \dot f_0,\Delta f_1\)+\(\Dd \Delta
f_0,\Delta \dot f_1\)+
\(\Delta \dot f_0,\Dd \Delta f_1\)+\(\Delta f_0,\Dd \Delta \dot
f_1\).
\end{array}
$$
Let us prove the statement of the lemma for an arbitrary point
$t_0$. Without loss of generality we fix $\Dd\dot f_1(t_0)=0$ and
$\Dd \Delta f_1(t_0)=0$ (this is possible since every flexion is
isometric to a flexion with such properties and isometries of
flexions do not change the functions in the formula of the lemma).
Then $\Dd \Delta f_0(t_0)$ is proportional to $\dot
f_1(t_0){\times} \Delta f_0(t_0)$, and hence there exists some
real number $\alpha$ with
$$
\Dd \Delta f_0(t_0)=\alpha \dot f_1(t_0){\times} \Delta f_0(t_0).
$$
Thus we immediately get
$$
\Dd\Phi(t_0)=\big\(\Dd \Delta f_0(t),\Delta f_1(t)\big\)=\alpha
\big(\dot f_1(t_0),\Delta f_0(t_0),\Delta f_1(t_0) \big).
$$
Let us express the summands for $\Dd\dot{\Phi}(t_0)$. We start with
$\big\(\Dd \Delta \dot f_0(t_0),\Delta f_1(t_0)\big\)$. First we
note that
\begin{equation}
\Delta f_1=\frac{(\Delta f_1,\Delta f_0,f_1{\times}\Delta \dot
f_0)}{(\dot f_1,\Delta f_0,\dot f_1{\times}\Delta f_0)}\dot f_1+
\frac{(\dot f_1,\Delta f_1,f_1{\times}\Delta \dot f_0)}{(\dot
f_1,\Delta f_0,\dot f_1{\times}\Delta f_0)}\Delta f_0+
\frac{(\dot f_1,\Delta f_0,\Delta f_1)}{(\dot f_1,\Delta f_0, \dot
f_1{\times}\Delta f_0)}f_1{\times}\Delta \dot f_0.
\label{ei}\end{equation}
Equation~(\ref{e8}) implies
\begin{equation}
\big\( \Dd\Delta \dot f_0(t_0),\dot f_1(t_0) \big\)=
-\big\( \Dd\dot f_1(t_0),\Delta \dot f_0(t_0) \big\) =
-\big\(0,\Delta \dot f_0(t_0) \big\) =0. \label{eii}
\end{equation}
From Equation~(\ref{e4}) we have
\begin{equation}
\big\( \Dd\Delta \dot f_0(t_0),\Delta f_0(t_0) \big\)=
-\big\(\Dd \Delta f_0(t_0),\Delta \dot f_0(t_0)\big\)= -\alpha
\big(\dot f_1 (t_0),\Delta f_0(t_0),\Delta \dot f_0(t_0) \big).
\label{eiii}\end{equation}
The function $(\Delta \dot f_0,\dot f_1,\Delta f_0)$ is invariant
of an infinitesimal flexion, therefore:
$$
(\Dd\Delta \dot f_0,\dot f_1, \Delta f_0)+(\Delta \dot f_0,\Dd\dot
f_1, \Delta f_0)+(\Delta \dot f_0 ,\dot f_1, \Dd \Delta f_0)=0,
$$
and hence
\begin{equation}
\begin{array}{r}
\big\(\Dd\Delta \dot f_0(t_0), \dot f_1(t_0){\times}\Delta
f_0(t_0)\big\)= -\big(\Delta \dot f_0(t_0), \dot f_1(t_0), \Dd
\Delta f_0(t_0)\big)\qquad \quad
\\
=-\alpha\big(\Delta \dot f_0(t_0),\dot f_1(t_0),\dot
f_1(t_0){\times}\Delta f_0(t_0) \big). \end{array}
\label{eiv}\end{equation}
Now we decompose $\Delta \dot f_0(t_0)$ in the last formula in the
basis of vectors $\dot f_1(t_0)$, $\Delta f_0(t_0)$, and $\Delta
f_1(t_0)$:

\begin{align*}
\big(\Delta \dot f_0(t_0),\dot f_1(t_0),\dot
f_1(t_0){\times}\Delta f_0(t_0) \big){=}&
\begin{array}{l}
\frac{\big(\dot f_1(t_0), \Delta \dot f_0(t_0),\Delta f_1(t_0)
\big)}{\big(\dot f_1(t_0), \Delta f_0(t_0),\Delta f_1(t_0) \big)}
\big(\Delta f_0(t_0),\dot f_1(t_0),\dot f_1(t_0){\times}\Delta
f_0(t_0) \big){+}
\end{array}
\\&
\begin{array}{l}
\frac{\big(\dot f_1(t_0), \Delta f_0(t_0), \Delta \dot f_0(t_0)
\big)}{\big(\dot f_1(t_0), \Delta f_0(t_0),\Delta f_1(t_0) \big)}
\big(\Delta f_1(t_0),\dot f_1(t_0),\dot f_1(t_0){\times}\Delta
f_0(t_0) \big).
\end{array}
\end{align*}
Therefore, after substitution~(\ref{ei}) of $\Delta f_2$ we
apply~(\ref{eii}), (\ref{eiii}), (\ref{eiv}), and the last
expression and get
\begin{align*}
\big\( \Dd \Delta \dot f_0 (t_0),\Delta f_1(t_0)\big\)=&
\begin{array}{l}
-\alpha \frac{\big(\dot f_1(t_0),\Delta f_1(t_0),\dot
f_1(t_0){\times}\Delta f_0(t_0)\big)}{\big(\dot f_1(t_0),\Delta
f_0(t_0),\dot f_1(t_0){\times}\Delta f_0(t_0)\big)} \big(\dot
f_1(t_0),\Delta f_0(t_0),\Delta \dot f_0(t_0) \big)-
\end{array}
\\&
\begin{array}{l}
\alpha \frac{\big(\dot f_1(t_0), \Delta \dot f_0(t_0),\Delta
f_1(t_0) \big)}{\big(\dot f_1(t_0),\Delta f_0(t_0),\dot
f_1(t_0){\times}\Delta f_0(t_0)\big)}
\big(\Delta f_0(t_0),\dot f_1(t_0),\dot f_1(t_0){\times} \Delta
f_0(t_0) \big)-
\\
\alpha \frac{\big(\dot f_1(t_0), \Delta f_0(t_0), \Delta \dot
f_0(t_0) \big)}{\big(\dot f_1(t_0),\Delta f_0(t_0),\dot
f_1(t_0){\times}\Delta f_0(t_0)\big)}
\big(\Delta f_1(t_0),\dot f_1(t_0),\dot f_1(t_0){\times}\Delta
f_0(t_0) \big)
\\
\end{array}
\\
=&-\alpha \big(\dot f_1(t_0),\Delta f_1(t_0),\Delta \dot
f_0(t_0)\big).
\\
\end{align*}
Similar calculations for the summand $\big\(\Delta
f_0(t_0),\Dd\Delta \dot f_1(t_0)\big\)$ (applying
Equations~(\ref{e3}), (\ref{e5}), and~(\ref{e9}) and the
conditions $\Dd\dot f_1(t_0)=0$ and $\Dd \Delta f_1(t_0)=0$) show
that
$$
\big\(\Delta f_0(t_0),\Dd\Delta \dot f_1(t_0)\big\)=0.
$$
Further we have
$$
\begin{array}{l}
\big\(\Dd \Delta f_0(t_0),\Delta \dot f_1(t_0)\big\)=\alpha\big(\dot f_1(t_0),\Delta f_0(t_0),\Delta \dot f_1(t_0)\big),\\
\big\(\Delta \dot f_0(t_0), \Dd \Delta f_1(t_0)\big\)=0.
\end{array}
$$
Therefore,
$$
\Dd\dot{\Phi}(t_0)=\alpha
\big(
\big(\dot f_1(t_0),\Delta \dot f_0(t_0),\Delta f_1(t_0) \big)+
\big(\dot f_1(t_0),\Delta f_0(t_0),\Delta \dot f_1(t_0)\big)
\big),
$$
and consequently
$$
\Dd\dot{\Phi}(t_0)=\frac
{\big(\dot f_1(t_0),\Delta \dot f_0(t_0), \Delta f_1(t_0)\big)
+\big(\dot f_1(t_0), \Delta f_0(t_0),\Delta \dot f_1(t_0) \big)}
{\big(\dot f_1(t_0),\Delta f_0(t_0),\Delta f_1(t_0)
\big)}\Dd\Phi(t_0).
$$
Thus Lemma~\ref{rel} holds for all possible values of $t_0$.
\end{subequations}
\end{proof}

\subsubsection{Discrete shift}

Every 3-ribbon surface contains two 2-ribbon surfaces as a subsurfaces.
Each of them has an infinitesimal flexion $\Dd\Phi_i$ ($i=1,2$),
where
$$
\Phi_1=\(\Delta f_0,\Delta f_1\) \qquad \hbox{and} \qquad
\Phi_2=\(\Delta f_1,\Delta f_2\).
$$
Let us show the relation between $\Dd\Phi_1$ and $\Dd\Phi_2$ for
the same values of argument $t$.

First, in Proposition~\ref{curvatures} we show a relation for
$\Dd\(\ddot f_1, \ddot f_1\)$ and $\Dd\(\ddot f_2, \ddot f_2\)$.
Second, in Proposition~\ref{PhiPsi} we give a link between
$\Dd\(\ddot f_1, \ddot f_1\)$ and $\Dd\Phi_1$. This will result in
the formula of Corollary~\ref{discr} on the relation between
$\Dd\Phi_1$ and $\Dd\Phi_2$.

\vspace{2mm}

We start with a formula expressing $\Dd\(\ddot f_2, \ddot f_2\)$
via $\Dd\(\ddot f_1, \ddot f_1\)$.

\begin{proposition}\label{curvatures}
Let $f$ be a strongly generic 3-ribbon surface in $C^{1,2,2,1}([a,b],\r^3)$, and let $\Dd f$ be its infinitesimal flexion.
Then the following equation holds:
$$
\Dd\(\ddot f_2, \ddot f_2\)= \frac{(\dot f_2, \ddot f_2,\Delta
f_1)}{(\dot f_1, \ddot f_1,\Delta f_1)}\Dd\(\ddot f_1, \ddot f_1\).
$$
\end{proposition}

\begin{proof}
We do calculations at a point $t_0$ again assuming that $\Dd\dot
f_1(t_0)=0$ and $\Dd \Delta f_1(t_0)=0$ (by choosing an appropriate
isometric representative of the deformation). Let us show that
$\Dd\dot f_2 (t_0)=0$. First, note that
$$
\Dd\dot f_2(t_0)=\Dd\dot f_1(t_0)+\Dd\Delta \dot f_1(t_0)=\Dd\Delta
\dot f_1(t_0).
$$
Secondly we show that the inner products of $\Dd\Delta \dot
f_1(t_0)$ and the vectors $\dot f_1(t_0)$, $\Delta f_1(t_0)$, and
$\dot f_1(t_0){\times}\Delta f_1(t_0)$ are all zero (this would
imply that $\Dd\Delta \dot f_1  (t_0)=0$).

From Equation~(\ref{e9}) we have
$$
\big\(\Dd\Delta \dot f_1(t_0),\dot f_1(t_0)\big\)=-\big\(\Dd\dot
f_1(t_0),\Delta \dot f_1(t_0)\big\)=
-\big\(0,\Delta \dot f_1(t_0)\big\) =0.
$$
Further, from Equations~(\ref{e5}), we get
$$
\big\(\Dd\Delta \dot f_1(t_0),\Delta f_1(t_0)\big\)=-\big\(\Dd
\Delta f_1(t_0),\Delta \dot f_1(t_0)\big\)=0.
$$
Finally, from the equation $\Dd (\dot f_1, \Delta f_1, \Delta \dot
f_1)=0$ we obtain
$$
\begin{array}{l}
\big\(\Dd\Delta \dot f_1(t_0),\dot f_1(t_0){\times}\Delta
f_1(t_0)\big\)=
\\
-\big(\Delta \dot f_1(t_0),\Dd\dot f_1(t_0),\Delta f_1(t_0)\big) -
\big(\Delta \dot f_1(t_0),\dot f_1(t_0),\Dd \Delta f_1(t_0)\big)=0.
\end{array}
$$
Therefore, $\Dd\Delta \dot f_1(t_0)=0$, and hence  $\Dd\dot
f_2(t_0)=0$.

\vspace{2mm}

From Equation~(\ref{e1}) and Equation~(\ref{e7}) we get
$$
\begin{array}{l}
\big\( \Dd\ddot f_1(t_0), \dot f_1(t_0) \big\)=
\frac{\partial}{\partial t} \big\( \Dd\dot f_1(t_0), \dot f_1(t_0)
\big\)-\big\( \ddot f_1(t_0), \Dd\dot f_1(t_0) \big\)=0-\big\(
\ddot f_1(t_0), 0 \big\)=0;\\
\big\(\Dd \ddot f_1(t_0),\Delta f_1(t_0)\big\)=
-\big\(\ddot f_1(t_0),\Dd\Delta f_1(t_0)\big\)=
-\big\(\ddot f_1(t_0),0\big\)=0.
\end{array}
$$
Therefore, for some real number $\beta_1$ we have
$$
\Dd\ddot f_1(t_0)=\beta_1 \dot f_1(t_0){\times}\Delta f_1(t_0).
$$
By a similar reasoning (since we have shown that $\Dd \dot
f_2(t_0)=0$) we get
$$
\Dd\ddot f_2(t_0)=\beta_2 \dot f_2(t_0){\times}\Delta f_1(t_0).
$$
 Since $\frac{\partial}{\partial t}\big(\Dd (\dot f_1, \Delta f_1,
\dot f_2)\big)=0$,  at point $t_0$  we have
$$
\big(\Dd\ddot f_1(t_0),\Delta f_1(t_0), \dot f_2(t_0)\big
)+\big(\dot f_1(t_0),\Delta f_1(t_0), \Dd\ddot f_2(t_0)\big )=0.
$$
Hence,
$$
\beta_1 \big(\dot f_1(t_0){\times}\Delta f_1(t_0),\Delta f_1(t_0),
\dot f_2(t_0)\big )+ \beta_2\big(\dot f_1(t_0),\Delta f_1(t_0),
\dot f_2(t_0){\times}\Delta f_1(t_0)\big )=0,
$$
and, therefore $\beta_1 =\beta_2$. This implies
$$
\Dd\big\(\ddot f_1(t_0), \ddot f_1(t_0) \big\)= 2\big\(\Dd\ddot
f_1(t_0), \ddot f(t_0) \big\)= 2 \beta_1 \big( \dot
f_1(t_0),\Delta f_1(t_0),\ddot f_1(t_0) \big)
$$
and
$$
\Dd\big\(\ddot f_2(t_0), \ddot f_2(t_0) \big\)= 2 \beta_1 \big(
\dot f_2(t_0),\Delta f_1(t_0),\ddot f_2(t_0) \big).
$$
The last two formulas imply the statement of
Proposition~\ref{curvatures}.
\end{proof}

Now let us relate $\Dd\(\ddot f_1, \ddot f_1\)$ and $\Dd\Phi$.

\begin{proposition}\label{PhiPsi}
Let $f$ be a weakly generic 2-ribbon surface in $C^{1,2,1}([a,b],\r^3)$.
Then the following identity holds:
$$
\Dd\(\ddot f_1, \ddot f_1\)=2
\frac{(\dot f_1, \ddot f_1, \Delta f_0)(\dot f_1, \ddot f_1,
\Delta f_1)}{(\dot f_1,\Delta f_0,\Delta f_1)^2}\Dd \Phi.
$$
\end{proposition}

\begin{proof}
We restrict ourselves to the case of a point. Without loss of
generality we assume that $\Dd\dot f_1(t_0)=0$ and $\Dd \Delta
f_1(t_0)=0$. So as we have seen before, there exists $\alpha$ such
that
$$
\Dd \Delta f_0(t_0)=\alpha \dot f_1(t_0){\times} \Delta f_0(t_0)
$$
and hence
$$
\Dd\Phi(t_0)=\alpha \big(\dot f_1(t_0),\Delta f_0(t_0),\Delta
f_1(t_0) \big).
$$
Let us calculate $\Dd\(\ddot f_1, \ddot f_1\)=2 \(\Dd\ddot f_1,
\ddot f_1\)$.
Decompose
$$
\ddot f_1=\frac{(\ddot f_1,\Delta f_0,\Delta f_1)}{(\dot
f_1,\Delta f_0,\Delta f_1)}\dot f_1+
\frac{(\dot f_1,\ddot f_1,\Delta f_1)}{(\dot f_1,\Delta f_0,\Delta
f_1)}\Delta f_0+
\frac{(\dot f_1,\Delta f_0, \ddot f_1)}{(\dot f_1,\Delta f_0,
\Delta f_1)}\Delta f_1.
$$
Since
$$
\big\(\Dd {\ddot f_1}(t_0),\dot f_1(t_0)\big\)=0, \qquad \hbox{
and} \qquad
\big\(\Dd {\ddot f_1}(t_0),\Delta f_1(t_0)\big\)=0,
$$
we get
$$
\Dd\(\ddot f_1(t_0), \ddot f_1(t_0)\)=2
\frac{\big(\dot f_1(t_0), \ddot f_1(t_0),\Delta
f_1(t_0)\big)}{\big(\dot f_1(t_0),\Delta f_0(t_0), \Delta
f_1(t_0)\big)}\big\(\Dd {\ddot f_1}(t_0),\Delta f_0(t_0)\big\).
$$
By Equation~(\ref{e6}) we have
$$
\big\(\Dd {\ddot f_1},\Delta f_0\big\)=-\big\( {\ddot f_1},\Dd
\Delta f_0\big\).
$$
Hence after the substitution of $\Dd \Delta f_0(t_0)$ in the first
summand one gets
$$
\big\(\Dd {\ddot f_1},\Delta f_0\big\)=\alpha \big(\dot
f_1(t_0),\ddot f_1(t_0),\Delta f_0(t_0) \big)=
\frac{\big(\dot f_1(t_0),\ddot f_1(t_0),\Delta f_0(t_0) \big)}
{\big(\dot f_1(t_0), \Delta f_0(t_0), \Delta f_1(t_0)\big)}
\Dd\Phi(t_0).
$$
Therefore, we obtain
$$
\Dd\(\ddot f_1(t_0), \ddot f_1(t_0)\)=2
\frac{\big(\dot f_1(t_0), \ddot f_1(t_0),\Delta
f_1(t_0)\big)\big(\dot f_1(t_0),\ddot f_1(t_0),\Delta f_0(t_0)
\big)} {\big(\dot f_1(t_0), \Delta f_0(t_0), \Delta
f_1(t_0)\big)^2}
\Dd\Phi(t_0).
$$
Since the statement does not depend on the choice of the basis and
invariant under isometries, we get the statement for all the
points.
\end{proof}

Let us show a formula of a discrete shift.

\begin{corollary}\label{discr}{\bf (On discrete shift.)}
Let $f$ be a strongly generic 3-ribbon surface in $C^{1,2,2,1}([a,b],\r^3)$.
Then the following holds:
$$
\Dd\Phi_2(t)=\frac{\big(\dot f_1(t),\ddot f_1(t),\Delta
f_0(t)\big)}{\big(\dot f_2(t),\ddot f_2(t),\Delta f_2(t)\big)}
\frac{\big(\dot f_2(t),\Delta f_1(t),\Delta
f_2(t)\big)^2}{\big(\dot f_1(t),\Delta f_0(t),\Delta
f_1(t)\big)^2}\Dd\Phi_1(t).
$$
\end{corollary}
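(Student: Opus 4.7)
My strategy is to use the infinitesimal variation $\D\(\ddot f_i,\ddot f_i\)$ of the squared acceleration as a bridge between $\D\Phi_1$ and $\D\Phi_2$: Proposition~\ref{PhiPsi} ties $\D\(\ddot f_i,\ddot f_i\)$ to $\D\Phi_i$ on each 2-ribbon subsurface, while Proposition~\ref{curvatures} transfers the variation across the shared curve $f_1$. Since all these relations are known, the corollary reduces to a short elimination.

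Concretely, I proceed in three steps. First, apply Proposition~\ref{PhiPsi} to the 2-ribbon subsurface $(f_0,f_1,f_2)$ of the 3-ribbon to obtain
$$\D\(\ddot f_1,\ddot f_1\) \;=\; 2\,\frac{(\dot f_1,\ddot f_1,\Delta f_0)\,(\dot f_1,\ddot f_1,\Delta f_1)}{(\dot f_1,\Delta f_0,\Delta f_1)^2}\,\D\Phi_1.$$
Second, shift all indices up by one and apply the same proposition to the 2-ribbon $(f_1,f_2,f_3)$ (where now $f_2$ plays the role of the middle curve, $\Delta f_1$ of $\Delta f_0$, and $\Delta f_2$ of $\Delta f_1$), which yields
$$\D\(\ddot f_2,\ddot f_2\) \;=\; 2\,\frac{(\dot f_2,\ddot f_2,\Delta f_1)\,(\dot f_2,\ddot f_2,\Delta f_2)}{(\dot f_2,\Delta f_1,\Delta f_2)^2}\,\D\Phi_2.$$
Third, apply Proposition~\ref{curvatures} to the 2-ribbon $(f_0,f_1,f_2)$ to get the transfer relation
$$\D\(\ddot f_2,\ddot f_2\) \;=\; \frac{(\dot f_2,\ddot f_2,\Delta f_1)}{(\dot f_1,\ddot f_1,\Delta f_1)}\,\D\(\ddot f_1,\ddot f_1\).$$

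Substituting the first formula into this transfer relation produces a second expression for $\D\(\ddot f_2,\ddot f_2\)$ in terms of $\D\Phi_1$, in which the factor $(\dot f_1,\ddot f_1,\Delta f_1)$ cancels out. Equating this with the identity obtained in the second step and cancelling the common factor $2(\dot f_2,\ddot f_2,\Delta f_1)$ from both sides leaves a relation between $\D\Phi_1$ and $\D\Phi_2$ which rearranges directly into the claimed formula. The only thing to verify is the legitimacy of the divisions and cancellations, which reduces to the linear independence of $\dot f_i,\Delta f_{i-1},\Delta f_i$ for $i=1,2$; this is the natural genericity assumption for a 3-ribbon surface and is in any case implicit in applying Propositions~\ref{PhiPsi} and~\ref{curvatures} to the two subsurfaces. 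There is no real obstacle: all the analytic work sits inside Propositions~\ref{PhiPsi} and~\ref{curvatures}, and the proof of the corollary is essentially a one-line algebraic elimination.
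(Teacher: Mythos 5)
Your proposal is correct and follows exactly the route the paper takes: the paper's proof of Corollary~\ref{discr} is the single sentence ``The statement follows directly from Propositions~\ref{curvatures} and~\ref{PhiPsi},'' and your three-step elimination is precisely the computation being left implicit there, carried out correctly (the factor $(\dot f_1,\ddot f_1,\Delta f_1)$ does cancel and the resulting coefficient matches the claimed one). The only quibble is that the nonvanishing of $(\dot f_2,\ddot f_2,\Delta f_1)$ and $(\dot f_2,\ddot f_2,\Delta f_2)$ does not literally follow from the linear independence of $\dot f_i,\Delta f_{i-1},\Delta f_i$, but the paper's own statement already presupposes this by placing $(\dot f_2,\ddot f_2,\Delta f_2)$ in a denominator.
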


\begin{proof}
The statement follows directly from Propositions~\ref{curvatures}
and~\ref{PhiPsi}.
\end{proof}

\subsection{Necessary condition of infinitesimal flexibility}

In this subsection we write down the infinitesimal flexibility
monodromy conditions for 3-ribbon surfaces (via continuous shifts
of Proposition~\ref{inner} and discrete shifts of
Corollary~\ref{discr}). Recall that
$$
\Lambda(t)= \frac{\big(\dot f_1(t),\ddot f_1(t),\Delta
f_0(t)\big)}{\big(\dot f_2(t),\ddot f_2(t),\Delta f_2(t)\big)}
\frac{\big(\dot f_2(t),\Delta f_1(t),\Delta
f_2(t)\big)^2}{\big(\dot f_1(t),\Delta f_0(t),\Delta
f_1(t)\big)^2},
$$
and
$$
H_i(t)= \frac{(\dot f_i(t), \Delta \dot f_{i-1}(t),\Delta
f_{i}(t))
+(\dot{f}_i(t), \Delta f_{i-1}(t), \Delta \dot f_{i}(t))}
{(\dot{f}_i(t),\Delta f_{i-1}(t),\Delta f_{i}(t))}, \quad i=1,2.
$$\label{1111}
\begin{theorem}\label{inf}
Let $f$ be a strongly generic 3-ribbon surface in $C^{1,2,2,1}([a,b],\r^3)$.
If the surface $f$ is infinitesimally flexible then
for every $t_1,t_2\in[a, b]$ we have
$$
\Lambda(t_2)\cdot \exp{\Big(\int\limits_{t_1}^{t_2}
H_1(t)dt\Big)} =
\Lambda(t_1)\cdot \exp{\Big(\int\limits_{t_1}^{t_2}H_2(t)dt\Big)}.
$$
\end{theorem}

\begin{remark}\label{infREM}
The condition of the proposition can be written in the ``almost'' equivalent infinitesimal form:
$$
\dot \Lambda-(H_2-H_1)\Lambda=0.
$$
Here the left hand side expression is considered as a function in the interval $[a,b]$.
The last expression has one disadvantage, $\dot \Lambda$ involves the third derivatives of $f_1$ and $f_2$,
while the expressions in proposition involve only up to the second derivatives.
\end{remark}

\begin{proof}
Let $f$ be infinitesimally flexible and $\Dd f$ be its infinitesimal nonzero flexion.
On the one side by Corollary~\ref{discr} we get relations between $\Dd\Phi_1(t_i)$
and $\Dd\Phi_2(t_i)$ for $i=1,2$. On the other side,
Proposition~\ref{inner} relates $\Dd\Phi_i(t_1)$ and
$\Dd\Phi_i(t_2)$ for $i=1,2$. These four relations define the
monodromy condition for $\Phi_i$ that is the condition in the
theorem, Therefore, it holds if a surface is infinitesimally
flexible.
\end{proof}

\begin{remark}
Let us write a more simple expressions for a surface
$w$ defined as
$$
\begin{array}{l}
w_0=f_1-\frac{1}{(\dot{f}_1,\Delta f_0,\Delta
f_{1})}\Delta f_0;\\
w_1=f_1;\\
w_2=f_2;\\
w_3=f_2+\frac{1}{(\dot{f}_2,\Delta f_1,\Delta f_2)}\Delta f_2.
\end{array}
$$
As one can see, all rulings of $w$ (if non-vanished) are parallel to the corresponding rulings of $f$.
In addition the middle strip of $f$ coincides with the middle strip of $w$.

Notice that
$$
\big(\dot{w}_1(t),\Delta w_0(t),\Delta w_{1}(t)\big)=1 \qquad
\hbox{and} \qquad \big(\dot{w}_2(t),\Delta w_1(t),\Delta
w_{2}(t)\big)=1
$$
for all arguments $t$. Hence we have:
$$
\begin{array}{l}
\Lambda= \frac{\big(\dot w_1,\ddot w_1,\Delta w_0\big)}{\big(\dot
w_2,\ddot w_2,\Delta w_2\big)},
\\
H_i= -(\ddot w_i,\Delta w_{i-1},\Delta w_{i}), \quad i=1,2.
\end{array}
$$
Note that this expression holds momentary.
\end{remark}

We conclude this subsection with the following open problem.

\begin{problem}
Find a sufficient condition for infinitesimal/finite flexibility of semidiscrete and
$3$-ribbon surfaces.
\end{problem}

\section{Flexibility of $n$-ribbon surfaces}

In this section we study flexibility questions for general case of $n\ge 2$.
We show that a strongly generic $n$-ribbon surface
has at most one degree of finite and infinitesimal flexibility (Subsection~4.1).
Further we study flexions of combined $n$-ribbon surfaces (Subsection~4.2).
This allows us to prove that
finite or infinitesimal flexibility of generic $n$-ribbon surfaces is identified by
finite or infinitesimal flexibility of all its 3-ribbon subsurfaces (Subsection~4.3).

\subsection{At most one degree of flexibility for strongly generic $n$-ribbon surfaces}

In this subsection we prove that all nontrivial infinitesimal flexions of strongly generic $n$-ribbon surfaces are
strongly isometrically nontrivial, and that such surfaces has at most one degree of infinitesimal flexibility.

\vspace{2mm}

Let us start with a useful tool to work with isometrically nontrivial flexions.

\begin{lemma}\label{lem-isometric}
An infinitesimal flexion of a weakly generic $n$-ribbon surface $f$ in the space $C^{0,1,0}([a,b],\r^3)$ is isometrically nontrivial
at a point $(t,i)$ $($where $i\in [1,\ldots, n-1]$$)$ if and only if
$$
\Dd\(\Delta f_{i-1}(t),\Delta f_i(t)\)\ne 0.
$$
\end{lemma}

\begin{proof}
Since $f$ is weakly generic, the pairs of vectors $(\dot f_i, \Delta f_{i-1})$ and $(\dot f_i, \Delta f_{i})$
span two non-coinciding 2-spaces $\pi_1$ and $\pi_2$.

Since $\pi_1$ and $\pi_2$ do not coincide, the condition $\Dd\(\Delta f_{i-1},\Delta f_i\)\ne 0$ is
equivalent to the fact that the infinitesimal flexion of the angle between $\pi_1$ and $\pi_2$ is non-zero.
Therefore, by Definition~\ref{def_isom} the last is equivalent to $f$ being isometrically nontrivial at a point $(t,i)$.
\end{proof}

In the next proposition we prove two important preliminary statements.

\begin{proposition}\label{iso-strong-iso}
Consider $n\ge 2$.
Let $f$ be a strongly generic $n$-ribbon surface in the space $C^{1,2,2,\ldots,2,1}_0([a,b],\r^3)$. Then the following two statements hold.

{\noindent
{\it $($i$)$}
Every isometrically nontrivial infinitesimal flexion of $f$ is strongly isometrically nontrivial
$($i.e., $f$ is isometrically nontrivial at every point $(t,i)$$)$.
}

{\noindent
{\it $($ii$)$}
For every regular isometric deformation $\gamma$ there exists
a locally monotone function $\xi$ such that $\gamma(\xi)$ is a normalized isometric deformation of $f$ in some neighborhood of $0$.
}
\end{proposition}

\begin{proof}
We prove Theorem~\ref{iso-strong-iso}$($i$)$ by induction in $n$.

{\noindent
{\it Base of induction. Case $n=2$.}
Let $\Dd f$ be a nontrivial infinitesimal flexion of a weakly generic 2-ribbon surface $f$ in $C^{1,2,1}_0([a,b],\r^3)$.
Therefore, there exists $t_0$ such that $\Dd\Phi(t_0)\ne 0$.
By Proposition~\ref{inner}, $\Dd\Phi(t_0)\ne 0$ implies that $\Dd\Phi(t)\ne 0$ for every $t\in[a,b]$.
Hence, by Lemma~\ref{lem-isometric} $f$ is isometrically nontrivial at each point $(t,1)$.
Therefore, $f$ is strongly isometrically nontrivial.
}

\vspace{2mm}

{\noindent
{\it Case $n=3$}.
 Let $\Dd f$ be a nontrivial infinitesimal flexion of a generic 3-ribbon surface $f$.
Therefore, there exists a point $(t_0,i)$ such that $\Dd\Phi_i(t_0)\ne 0$.
Without loss of generality we assume that $i=1$ (the case $i=2$ is similar).

By the above in case $n=2$ we have: $\Dd\Phi_1(t_0)\ne 0$ implies that $\Dd\Phi_1(t)\ne 0$ for every $t\in[a,b]$.
By Corollary~\ref{discr} (and the strongly generic condition for $f$), for every $t\in[a,b]$ the statement $\Dd\Phi_1(t)\ne 0$ implies that $\Dd\Phi_2(t)\ne 0$.
Hence, by Lemma~\ref{lem-isometric} $f$ is isometrically nontrivial
at each point $(t,1)$ and $(t,2)$.
Therefore, by Definition~\ref{def_isom} $f$ is strongly isometrically nontrivial.
}

\vspace{2mm}

{\noindent
{\it Step of induction}.
Consider a strongly generic $n$-ribbon surface $f$ with $n\ge 4$.
Denote
$$
f^1=(f_0,f_1,\ldots, f_{n-1}),
\qquad
f^2=(f_1,\ldots, f_{n-1},f_n)
\quad
\hbox{and}
\quad
f^{12}=(f_1,\ldots, f_{n-1})
$$
Let $\Dd f$ be isometrically nontrivial flexion of $f$.
Without loss of generality we assume that $\Dd f^1$ is isometrically nontrivial.
Hence by the induction assumption $\Dd f^1$ is strongly isometrically nontrivial.
Thus, $\Dd f^{12}$ is strongly isometrically nontrivial.
Since $f^{12}$ is a $(n-2)$-ribbon (with $n\ge 4$), we have that $\Dd f^2$ is isometrically nontrivial.
Then by the induction assumption $\Dd f^2$ is strongly isometrically nontrivial.
}

Since $n\ge 3$ and both $f^1$ and $f^2$ are strongly isometrically nontrivial,
$f$ is strongly isometrically nontrivial as well.
This concludes the proof of Proposition~\ref{iso-strong-iso}$($i$)$.

\vspace{2mm}

{
\noindent
Let us prove Proposition~\ref{iso-strong-iso}$($ii$)$.
Let $\{f^\lambda\}$ be a regular isometric deformation of $f$ with parameter $\lambda \in [-\Lambda,\Lambda]$.
Since $f$ has a fixed initial position, we have:
$$
\Dd_{f^\lambda} \dot f_1^{\lambda_0}(a)=0, \quad \Dd_{f^\lambda} \Delta f_0^{\lambda_0}(a)=0,
\quad \hbox{and} \quad
\Dd_{f^\lambda} \Delta f_1^{\lambda_0}(a)= \alpha(\lambda) \dot f_1^{\lambda_0}(a) {\times} \Delta f_1^{\lambda_0}(a)
$$
for every $\lambda_0\in[-\Lambda,\Lambda]$.
}

Since $\{f^\lambda\}$ is regular, $\Dd_{f^\lambda}f^{0}\ne 0$.
Therefore, there exists $\varepsilon >0$ such that $\Dd_{f^\lambda}f^{\lambda_0}\ne 0$ for $\lambda_0\in[-\varepsilon,\varepsilon]$.
From  Proposition~\ref{iso-strong-iso}$($i$)$ it follows that
for every $\lambda_0\in[-\varepsilon,\varepsilon]$ the flexion $\Dd_{f^\lambda}f^{\lambda_0}$ is strongly isometrically nontrivial.
Hence
$$
\alpha(\lambda)\ne 0 \quad \hbox{for} \quad \lambda\in[-\varepsilon,\varepsilon].
$$
Therefore, $\alpha(\lambda)$ is either a positive function or a negative function on $[-\varepsilon,\varepsilon]$. Denote
$$
\varphi(\lambda)=\int\limits_{0}^{\lambda}\alpha(\tau)d\tau.
$$
The function $\varphi$ is monotonous on $[-\varepsilon,\varepsilon]$, and hence there exists an inverse function $\varphi^{-1}$ on that interval.
Denote
$$
\xi=
\left\{
\begin{array}{rl}
\varphi^{-1}, & \hbox{if $\varphi$ is increasing,}\\
-\varphi^{-1}, & \hbox{if $\varphi$ is decreasing.}\\
\end{array}
\right.
$$
Choose positive $\hat \varepsilon$ such that $\xi$ is defined on $[-\hat \varepsilon,\hat \varepsilon]$.
Then
$$
\alpha(\xi(\lambda))=1
$$
for all $\lambda \in [-\hat \varepsilon,\hat \varepsilon]$. Hence
$\gamma\circ\xi$ is a normalized isometric deformation.
\end{proof}

Now we study degrees of finite and infinitesimal flexibility.

\begin{remark}\label{proportional2D}
To be consistent we mention the case of 2-ribbon surfaces.
Let $f$ be a weakly generic 2-ribbon semidiscrete surface in the space $C^{1,2,1}_0([a,b],\r^3)$.
Then the following two statements hold.
\\
{\it $($i$)$} The surface $f$ has one degree of  infinitesimal flexibility (Theorem~\ref{1-d-inf}).
\\
{\it $($ii$)$} The surface $f$ has one degree of finite flexibility (Theorem~\ref{2-ribbon flex}).
\end{remark}

Let us prove a similar statement for the case of $n\ge 3$.

\begin{theorem}\label{proportionalND}
Consider $n\ge 3$.
Let $f$ be a strongly generic $n$-ribbon surface in the space $C^{1,2,2,\ldots,2,1}_0([a,b],\r^3)$. Then the following two statements hold.

{\it $($i$)$} The surface $f$ has at most one degree of infinitesimal flexibility
$($i.e., all infinitesimal isometrically nontrivial flexions are proportional$)$.

{\it $($ii$)$} The surface $f$ is either finitely rigid or has one degree of finite flexibility.
\end{theorem}

\begin{proof} {\it $($i$)$} Let us assume the converse.
Suppose there are two non-proportional isometrically-nontrivial flexions $\Dd^1 f$ and $\Dd^2 f$.
By Proposition~\ref{iso-strong-iso}(i) both flexions are isometrically nontrivial at $(a,1)$
Hence there exists $\alpha$ such that the infinitesimal flexion
$$
\Dd f=\Dd^1 f- \alpha\Dd^2f
$$
is isometrically trivial at $(a,1)$. Since $\Dd^1 f$ and $\Dd^2 f$ are non-proportional, there exists
a point $(t,i)$ at which the flexion $\Dd f$ is isometrically nontrivial.
Hence by Proposition~\ref{iso-strong-iso}(i) the infinitesimal flexion $\Dd f$ is isometrically nontrivial at $(a,1)$.
We arrive at a contradiction.

{
\noindent
{\it $($ii$)$} By Theorem~\ref{proportionalND}(i) all infinitesimal flexions are proportional.
Hence $f$ has at most one degree of infinitesimal flexibility.
If it is zero, then the $f$ is infinitesimally rigid and hence it is finitely rigid.
}

Let $f$ has one degree of infinitesimal flexibility.
If $f$ does not have regular isometric deformations then $f$ is finite rigid.
If $f$ has a regular isometric deformation, then $f$ has a normalized isometric deformation.
Let us show that there exists at most one normalized isometric deformation of $f$.
Let $\{f^\lambda\}$ be a normalized isometric deformation of $f$.
As before we denote
$$
\Phi_i^\lambda=\(\Delta f_{i-1}^\lambda,\Delta f_i^\lambda\).
$$
Notice that for normalized isometric deformations we have:
$$
\Dd_{f^{\lambda}}(\Phi_0^\lambda (a))=(\Delta f_0^\lambda(a),\dot f_1^\lambda(a),\Delta f_1^\lambda).
$$
Therefore
$$
\Phi_0^\lambda (a)=\int\limits_{0}^{\lambda}(\Delta f_0^\mu(a),\dot f_1^\mu(a),\Delta f_1^\mu) d\mu.
$$
Hence $\Phi_0^\lambda (a)$ coincides for all normalized isometric deformation of $f$.
Therefore, by Proposition~\ref{inner} and Corollary~\ref{discr} for every $(t,i)$ and every parameter $\lambda$
the value
$$
\Phi_i^\lambda (t)
$$
is the same for all normalized isometric deformations.
Therefore, by Theorem~\ref{2-ribbon flex} every restriction of an arbitrary normalized isometric deformation $\{f^\lambda\}$
to the deformation of a 2-ribbon subsurface of $f$ does not depend on the choice of the normalized isometric deformation $\{f^\lambda\}$.
Hence, all normalized isometric deformations of $f$ coincide.
Therefore, $f$ has one degree of finite flexibility.
\end{proof}

\begin{remark}
The strongly genericity condition of Theorem~\ref{iso-strong-iso} is essential.
Let us illustrate this with a simple example of a 3-ribbon surfaces which is not strongly generic.
Consider
$$
f_0(t)=(t,1,0);
\quad
f_{1}(t)=(t,0,0);
\quad
f_{2}(t)=(t,0,1);
\quad
f_{3}(t)=(t,1,1);
$$
This surface has two distinct isometric deformations:
$$
\begin{array}{l}
\hbox{{\bf(i) }}
f^\alpha_0(t)=(t,1,0);
\quad
f^\alpha_{1}(t)=(t,0,0);
\quad
f^\alpha_{2}(t)=(t,0,1);
\quad
f^\alpha_{3}(t)=(t,\cos\alpha,1+\sin\alpha);
\\
\hbox{{\bf(ii) }}
f^\beta_0(t)=(t,1,0);
\quad
f^\beta_{1}(t)=(t,0,0);
\\
\qquad
f^\beta_{2}(t)=(t,\sin\beta, \cos\beta);
\quad
f^\beta_{3}(t)=\big(t,\sqrt{2}\sin(\beta+\frac{\pi}{4}),\sqrt{2}\cos(\beta+\frac{\pi}{4})\big);
\\
\end{array}
$$
The infinitesimal flexions defined by these isometric deformations are not proportional.
\end{remark}

\subsection{Flexibility of combined $n$-ribbon surfaces}

In this subsection we study finite and infinitesimal flexions of combined strongly generic semidiscrete surfaces.

As above, for an arbitrary semidiscrete surface $f=(f_0,f_1,\ldots,f_n)$ we denote
\begin{equation}\label{parts}
f^1=(f_0,f_1,\ldots, f_{n-1}),
\qquad
f^2=(f_1,\ldots, f_{n-1},f_n)
\quad
\hbox{and}
\quad
f^{12}=(f_1,\ldots, f_{n-1}).
\end{equation}

\subsubsection{Infinitesimal case}
We start with the infinitesimal case.

\begin{theorem}\label{combinedInf}{\bf (Infinitesimal flexibility of combined semidiscrete surfaces.)}
Let $n\ge 4$.
Consider a strongly generic $n$-ribbon semidiscrete surface $f$ in $C_0^{1,2,2,\ldots,2,1}([a,b],\r^3)$.
Let surfaces $f^1$ and $f^2$ defined by~$($\ref{parts}$)$ be infinitesimally flexible.
Then $f$ is infinitesimally flexible and has precisely one degree of infinitesimal flexibility.
\end{theorem}

\begin{proof}
Let $\Dd^1 f^1$ and $\Dd^2 f^2$ be isometrically nontrivial flexions of $f^1$ and $f^2$ respectively.
Since $f^1$ and $f^2$ are strongly generic $(n{-}1)$-ribbons surfaces for $n\ge 4$, Theorem~\ref{iso-strong-iso} can be applied.
By Theorem~\ref{iso-strong-iso}$($i$)$ the surfaces $f^1$ and $f^2$ are strongly isometrically nontrivial.
Hence by Theorem~\ref{iso-strong-iso}$($i$)$ in case $n>4$ or by Theorem~\ref{1-d-inf} (see Remark~\ref{proportional2D} above) in case $n=4$ the induced flexions
$\tilde \Dd^1 f^{12}$ and $\tilde \Dd^2 f^{12}$ are proportional, i.e,
there exists $\alpha$ such that
$$
\Dd^1 f^{12} = \alpha \Dd^2 f^{12}.
$$
Thus the surface $f$ has the combined infinitesimal flexion $\Dd f$ that induces $\Dd^1 f^1$ and $\alpha\Dd^2 f^2$.
This flexion is infinitesimally nontrivial, since the induced ones are infinitesimally nontrivial.
Hence $f$ has at least one degree of infinitesimal flexibility.

On the other hand by Theorem~\ref{iso-strong-iso}$($i$)$ the surface $f$ has at most one degree of infinitesimal flexibility.
Hence $f$ is infinitesimally flexible and has one degree of infinitesimal flexibility.
\end{proof}

\subsubsection{Finite case}
We start with the following general statement on reparametrisation of deformations.

\begin{proposition}\label{isom-deform}
Let $f$ be a strongly generic $n$-ribbon surface $($$n\ge 2$$)$ in the space $C^{1,2,2,\ldots,2,1}([a,b],\r^3)$.
Consider two regular isometric deformations $\gamma_1$ and $\gamma_2$ of $f$.
Then there exists a monotonous function $\xi$ such that
$\gamma_1(\lambda)=\gamma_2(\xi(\lambda))$ in some small neighborhood of $0$.
\end{proposition}

\begin{proof}
By Proposition~\ref{iso-strong-iso}(ii) there exists monotonous functions $\xi_1$ and $\xi_2$
such that $\gamma_1\circ \xi_1$ and $\gamma_2\circ \xi_2$ are normalized isometric deformations of $f$.
Hence by Theorem~\ref{proportionalND} in case $n\ge 3$ and Theorem~\ref{2-ribbon flex} (see Remark~\ref{proportional2D} above) in case $n=2$ we have
$$
\gamma_1\circ \xi_1=\gamma_2\circ \xi_2
$$
in some neighborhood of $0$. Set $\xi=\xi_2\circ \xi_1^{-1}$. The function $\xi$ is the monotonous function such that
$\gamma_1(\lambda)=\gamma_2(\xi(\lambda))$ in some small neighborhood of $0$.
\end{proof}

\begin{theorem}\label{combinedFin}{\bf (Finite flexibility of combined semidiscrete surfaces.)}
Let $n\ge 4$.
Consider a strongly generic $n$-ribbon semidiscrete surface $f$ in $C_0^{1,2,2,\ldots,2,1}([a,b],\r^3)$.
Let surfaces $f^1$ and $f^2$ defined by~$($\ref{parts}$)$ be finitely flexible.
Then $f$ is finitely flexible and has one degree of finite flexibility.
\end{theorem}

\begin{proof}
By Theorem~\ref{proportionalND}(ii) the surfaces $f^1$ and $f^2$ have one degree of finite flexibility.
Therefore, there exist unique normalized isometric deformations $\gamma_1$ and $\gamma_2$ of $f^1$ and $f^2$ respectively.
They induce two deformations $\tilde\gamma_1$ and $\tilde\gamma_2$ of the $(n{-}2)$-ribbon surface $f^{12}$.
By Proposition~\ref{isom-deform}, since $n-2\ge 2$,  these two deformations locally parameterize the same curve in $C_0^{1,2,2,\ldots,2,1}([a,b],\r^3)$,
i.e.,
in the segment $[-\varepsilon,\varepsilon]$ for some $\varepsilon>0$
there exists a locally increasing function $\xi$ such that $\tilde\gamma_1(\lambda)=\tilde\gamma_2(\xi(\lambda))$.

Now consider the deformation $\gamma$ of the surface $f$ inducing both isometric deformations
$\gamma_1$ for $f_1$ and $\gamma_2\circ \xi$ for $f_2$
in the segment $[-\varepsilon,\varepsilon]$ for some $\varepsilon>0$.
The deformation $\gamma$ is isometric, since the induced ones are isometric.
In addition, $\gamma$ is normalized, since its restriction to $f^1$ is a normalized isometric deformation.
Hence $f$ is finitely flexible (and not finitely rigid).
Therefore, by Theorem~\ref{iso-strong-iso}(ii) $f$ has one degree of finite flexibility.
\end{proof}

\begin{remark}
Note that the statements of Theorems~\ref{combinedInf} and~\ref{combinedFin} are no longer true for the case $n=3$.
On the one hand every infinitesimally flexible (and, therefore, finitely flexible) 3-ribbon surface
should satisfy a condition of Theorem~\ref{inf}, and, as it is easy to see, not every strongly generic 3-ribbon surface satisfies it.
Hence there are strongly generic rigid 3-ribbon surfaces.
On the other hand every 2-ribbon subsurface is weakly generic and hence it is  finitely (and, therefore, infinitesimally) flexible.
These two statements together contradict to the version of the statement of Theorems~\ref{combinedInf} for the case $n=3$.
\end{remark}

\subsection{An $n$-ribbon surface and its 3-ribbon subsurfaces}

Let us finally describe a relation between finite/infinitesimal flexibility of $n$-ribbon surfaces and finite/infinitesimal flexibility
of all 3-ribbon subsurfaces contained in them.

\begin{theorem}\label{n-ribbon-infini}
Let $n\ge 4$.
Consider a strongly generic $n$-ribbon surface $f$ in the space $C^{1,2,2,\ldots,2,1}_0([a,b],\r^3)$.
Then $f$ is infinitesimally flexible $($and has one degree of infinitesimal flexibility$)$ if and only if every
3-ribbon surface contained in the surface is infinitesimally flexible.
\end{theorem}

\begin{proof}
Let $f$ be infinitesimally flexible. Therefore, there exists an infinitesimal flexion $\Dd f$ that is
isometrically nontrivial.
Therefore, by  Proposition~\ref{iso-strong-iso}$($i$)$ the flexion $\Dd f$ is strongly isometrically nontrivial.
Hence all its 3-ribbon surfaces are isometrically nontrivially flexible.

\vspace{2mm}

Suppose now that all 3-ribbon subsurfaces in a strongly generic surface $f$ are infinitesimally flexible.
We prove that all $k$-ribbon surfaces are infinitesimally flexible for $k=3,4,\ldots,n$ by induction in $k$.

{\noindent {\it Base of induction.} The case $k=3$ is tautological.}

{\noindent {\it Step of induction.} The $k$-th statement follows from the $(k-1)$-th by Theorem~\ref{combinedInf}.}

Hence $f$ is infinitesimally flexible. Therefore, by Theorem~\ref{proportionalND}$($i$)$ $f$ has one degree of infinitesimal flexibility.
\end{proof}

For the finite flexibility we have the following.

\begin{theorem}\label{n-ribbon-fini}
Let $n\ge 4$.
Consider a strongly generic $n$-ribbon surface $f$ in the space  $C^{1,2,2,\ldots,2,1}_0([a,b],\r^3)$.
Then this surface is finitely flexible $($and has one degree of flexibility$)$
if and only if
every 3-ribbon surface contained in the surface is finitely flexible.
\end{theorem}

\begin{remark}
We think of this theorem as of a semidiscrete analogue to the
statement of the paper~\cite{BHS} on conjugate nets and all
$(3\times 3)$-meshes that they contain. In this paper we do not
study phenomena related to non-compactness and hence we restrict
ourselves to the case of compact $n$-ribbons surfaces.
\end{remark}

\begin{proof}
Let $f$ be finitely flexible. Therefore there exists a regular isometric deformation $\gamma$ of $f$.
Since $\gamma$ is regular we have $\Dd_{\gamma}f\ne 0$.
Since every finite flexion is infinitesimal flexion
we are in position to apply Proposition~\ref{iso-strong-iso}$($i$)$. We get that the flexion $\Dd_{\gamma}f$ is strongly isometrically nontrivial.
Hence the induced isometric deformations of all 3-ribbon surfaces have corresponding nontrivial finite flexions.
Therefore, all 3-ribbon surfaces contained in $f$ are finitely flexible.

\vspace{2mm}

Suppose that all 3-ribbon subsurfaces in a strongly generic surface $f$ are finitely flexible.
Let us prove that every $k$-ribbon surface in $f$ is finitely flexible for $k=3,4,\ldots,n$ by induction in $k$.

{\noindent {\it Base of induction.} The case $k=3$ is tautological.}

{\noindent {\it Step of induction.} The $k$-th statement follows from the $(k-1)$-th by Theorem~\ref{combinedFin}.}

Hence $f$ is finitely flexible. Therefore, by Theorem~\ref{proportionalND}$($ii$)$ $f$ has one degree of finite flexibility.
\end{proof}

\section{Isometric deformation of developable semidiscrete surfaces}

Suppose that all ribbons of a semidiscrete surface are
developable, i.e., the vectors $\dot f_i$, $\Delta f_i$, and $\dot
f_{i+1}$ are linearly dependent. We call such semidiscrete
surfaces {\it developable}. In this section we describe an
additional property for flexions of developable semidiscrete
surfaces. We start with 2-ribbon surfaces.

Recall that
$$
H_1=
\frac{(\dot f_1, \Delta \dot f_{0},\Delta f_{1})
+(\dot{f}_1, \Delta f_{0}, \Delta \dot f_{1})}
{(\dot{f}_1,\Delta f_{0},\Delta f_{1})}
$$
(as defined on page~\pageref{1111}).

\begin{proposition}\label{devel}
Consider a developable weakly generic 2-ribbon semidiscrete surface $f$ in $C^{1,2,1}([a,b],\r^3)$. Let
$$
\dot f_0(t) =a(t)\dot f_1(t)+b(t)\Delta f_{0}(t)
\quad \hbox{and} \quad
\dot f_2(t) =c(t)\dot f_1(t)+d(t)\Delta f_{1}(t).
$$
Then we have
$$
H_1(t)= d(t)-b(t).
$$
\end{proposition}

\begin{proof}
First, we have
\begin{align*}
(\dot f_1,\Delta \dot f_0,\Delta f_1)&=
(\dot f_1,\dot f_1-\dot f_0,\Delta f_1)=
-(\dot f_1,\dot f_0,\Delta f_1)=
-(\dot f_1,a\dot f_1+b\Delta f_{0},\Delta f_1)
\\
&=-b(\dot f_1,\Delta f_{0},\Delta f_1).
\end{align*}
Secondly, in a similar way we get
$$
(\dot f_1,\Delta f_0,\Delta \dot f_1)=d(\dot f_1,\Delta f_{0},\Delta f_1).
$$
Finally we have
$$
H_1=
\frac{(\dot f_1, \Delta \dot f_{0},\Delta f_{1})
+(\dot{f}_1, \Delta f_{0}, \Delta \dot f_{1})}
{(\dot{f}_1,\Delta f_{0},\Delta f_{1})}=d-b.
$$
This concludes the proof.
\end{proof}

This fact gives a surprising corollary concerning the flexion of a
2-ribbon developable surface.
Denote by $\alpha(t)$ the angle between $\Delta f_0(t)$ and $\Delta f_1(t)$.

\begin{corollary}\label{devel-cor}
Let $f$ be a weakly generic 2-ribbon developable surface in $C^{1,2,1}([a,b],\r^3)$.
Consider its isometric deformation $\gamma$.
Let us choose the parameter $\lambda$ of $\gamma$ such that
$\cos(\alpha (t_0))$ linearly depends on $\lambda$. Then for every
$t\in [a,b]$ the value $\cos(\alpha(t))$ linearly depends on $\lambda$.
\end{corollary}

\begin{proof}
First of all, notice that
$$
|\Delta f_0||\Delta f_1|\cos \alpha=\(\Delta f_0,\Delta f_1\)=\Phi,
$$
and hence
$$
\cos \alpha=\frac{\Phi}{|\Delta f_0||\Delta f_1|}.
$$

By Proposition~\ref{inner} and further by Proposition~\ref{devel} we have
$$
\D\Phi(t_1)=
\D\Phi(t_0)\cdot\exp{\left(\int\limits_{t_0}^{t_1} H_1(t)dt\right)}
=
\D\Phi(t_0)\cdot\exp{\left(\int\limits_{t_0}^{t_1}( d(t)- b(t))dt\right)}.
$$
Therefore, the ratio $\D\Phi(t_1)/\D\Phi(t_0)$ is a nonzero constant that depends entirely on the inner geometry of a 2-ribbon surface,
but not on its embedding in $\r^3$.
Therefore, the ratio
$$
\frac{\cos \alpha(t_1)}{\cos \alpha(t_0)}=\frac{\D\Phi(t_1)}{\D\Phi(t_0)}\frac{|\Delta f_0(t_0)||\Delta f_1(t_0)|}{|\Delta f_0(t_1)||\Delta f_1(t_1)|}
$$
is a nonzero constant that depends entirely on the inner geometry of a 2-ribbon surface but not on its embedding in $\r^3$ as well.
This implies the statement of the corollary.
\end{proof}

In fact, Corollary~\ref{devel-cor} implies a similar statement for an isometric deformation of a strongly generic $n$-ribbon developable surface.

\begin{corollary}
Consider a strongly generic finitely flexible $n$-ribbon developable surface of $f$ in $C^{1,2,1}([a,b],\r^3)$.
Let $\gamma$ be a nontrivial isometric deformation of $f$ $($i.e., $\D f\ne 0$$)$.
Then there exists a choice of the parameter $\lambda$ of the deformation $\gamma$,
such that for all $t\in[a,b]$ and $i\in\{1,\ldots, n{-}1\}$ all the cosines of the corresponding angles $\alpha_i(t)$
linearly depend on $\lambda$
$($here $\alpha_i(t)$ denotes the angle between $\Delta f_i(t)$ and $\Delta f_{i+1}(t)$$)$.
\qed
\end{corollary}

\vspace{5mm}

{\bf Acknowledgement.} The author is grateful to J.~Wallner for
constant attention to this work, A.~Weinmann for good remarks,
and the unknown reviewer for excellent comments and suggestions.
This work has been partially supported by the project ``Computational Differential
Geometry'' (FWF grant No.~S09209).

\vspace{1cm}

\vspace{5mm}

\end{document}